\crefname{hypothesis}{Hypothesis}{Hypotheses}
\title{High order biorthogonal functions in \texorpdfstring{$H(\Curl)$}{}\thanks{Submitted to the editors \today.}\funding{The work of the second author is funded by the Deutsche Forschungsgemeinschaft (DFG) under Germany’s Excellence Strategy within the Cluster of Excellence PhoenixD (EXC 2122, Project ID 390833453).}}
\author{Tim Haubold\thanks{Leibniz University Hannover, Hannover, Germany 
  (\email{haubold@ifam.uni-hannover.de}).}
\and Sven Beuchler\footnotemark[2]
\and Joachim Sch\"oberl\thanks{Institute for Analysis and Scientific Computing, TU Wien, Vienna, Austria}}
\tikzset{%
  symbol/.style={
    draw=none,
    every to/.append style={
      edge node={node [sloped, allow upside down, auto=false]{$#1$}}
    },
  },
}
\newtcolorbox[auto counter,number within=section,crefname={table}{tables}]{funbox}[2][]{%
nofloat,
colframe=purple!75!black,
colback=purple!5!white,
coltitle=yellow,
subtitle style={boxrule=0.6pt,colback=purple!15!white,fontupper=\color{black}},
fonttitle=\bfseries,
fonttitle=\bfseries,
title=Table~\thetcbcounter: #2,label={#1}}
\newtcolorbox[auto counter,number within=section,crefname={table}{tables}]{recbox}[2][]{%
colframe=violet!75!black,colback=violet!5!white,coltitle=yellow,subtitle style={boxrule=0.4pt,
colback=violet!75!tcbcolbacklower} ,
fonttitle=\bfseries,fonttitle=\bfseries,
title=Table~\thetcbcounter: #2,label={#1}}
\pgfmathsetmacro{\a}{-2}%
\pgfmathsetmacro{\b}{2}
\pgfmathsetmacro{\h}{3}%
\DeclarePairedDelimiter\abs{\lvert}{\rvert}%
\DeclarePairedDelimiter\norm{\lVert}{\rVert}%
\let\oldabs\abs
\def\abs{\@ifstar{\oldabs}{\oldabs*}}
\let\oldnorm\norm
\def\norm{\@ifstar{\oldnorm}{\oldnorm*}}
\newcommand{\mathsym}[1]{{}}
\newcommand{\unicode}[1]{{}}
\newcommand{\lhat}{\widehat{L}}
\newcommand{\phat}{\widehat{P}}
\newcommand{\dufx}{\left(\frac{2x}{1-y}\right)}
\newcommand{\Grad}{\nabla}
\DeclareMathOperator{\Curl}{curl}
\DeclareMathOperator{\Div}{div}
\DeclareMathOperator{\Id}{id}
\newcommand{\dx}{\, \mathrm{d}x}
\newcommand{\dy}{\, \mathrm{d}y}
\newcommand{\dz}{\, \mathrm{d}z}
\newcommand{\deta}{\, \mathrm{d}\eta}
\newcommand{\R}{\mathbb{R}}
\newcommand{\V}{\mathbb{V}}
\newcommand{\ned}{\mathcal{N}}
\newcommand{\sm}{\scalebox{0.65}[1.0]{\( - \)}}
\newcommand{\spl}{\scalebox{0.65}[1.0]{\( + \)}}
\begin{document}

\maketitle

\begin{abstract}
From the literature, it is known that the choice of basis functions in $hp$-FEM  heavily influences the computational cost in order to obtain an approximate solution. 
Depending on the choice of the reference element, suitable tensor product like basis functions of
Jacobi polynomials with different weights lead to optimal properties due to condition number and sparsity.
This paper presents biorthogonal basis functions to the primal basis functions mentioned above.
The authors investigate hypercubes and simplices as reference elements, as well as the cases of $H^1$ and
$H(\Curl)$. The functions can be expressed sums of tensor products of Jacobi polynomials with maximal
two summands.
 \end{abstract}

\begin{keywords}
  Finite elements, Numerical methods for partial differential equations, orthogonal polynomials
\end{keywords}

\begin{AMS}
  65N22, 65N30, 33C45
\end{AMS}

\section{Introduction}
It is well known, that $hp$ finite element methods (fem) often exhibit exponential convergence rates depending on the domain and the boundary, see e.g. \cite{Szabo,Schwab,melenk}, if the exact solution of the
underlying partial differential equation is (locally) sufficiently smooth.
One of the most important algorithmic parts of a $hp$-fem method is the choice of basic functions. 
It has been shown in \cite{Babuka1989,Maitre} that the choice directly influences the condition number of the local and global matrices. 
Nodal basis functions, e.g.  based on Lagrangian polynomials, exhibit an exponential growth in the condition number.  We also mention the related spectral element method \cite{Karniadakis}. 
For efficient assembly routines, see \cite{Eibner}.\\
An alternative to nodal basis functions are modal basis function, e.g. based on Bernstein or classical orthogonal polynomials.
The former choice has the advantage that assembly routines in optimal complexity exist and, by their natural connection to \texttt{NURBS}, can easily be visualized, see \cite{Ainsworth1,Ainsworth2}. 
On the other hand, the latter choice yields in overall better condition numbers. Furthermore, in case of a polygonal domain and piecewise constant material functions, they yield in sparse element matrices and can be assembled in optimal complexity as well, see \cite{Haubold}.\\
In this paper, we use basis functions based on integrated orthogonal polynomials. This choice goes back to Szab\'{o} and Babu\v{s}ka, see e.g. \cite{Szabo}. For simplices, an analogue choice was introduced by \cite{Dubiner,Karniadakis95,beuchler}. Due to different variational formulations, one naturally derives the different function spaces $H^1,H(\Curl)$ and $H(\Div).$ These spaces are connected by an exact sequence, the so called \textit{De-Rham-complex}. For the weak gradient, curl and divergence operator, these complexes read in $2D$ as either
\begin{equation*}
 \R \overset{\Id}{\to} H^1(\Omega) \overset{\Grad}{\to} H(\Curl,\Omega) \overset{\Curl}{\to} L^2(\Omega) ,\end{equation*}
 or
 \begin{equation*}
 \R \overset{\Id}{\to} H^1(\Omega)  \overset{\Curl}{\to} H(\Div,\Omega) \overset{\Div}{\to} L^2(\Omega),\end{equation*}
 and in $3D$ as
 \begin{equation*}
 \R \overset{\Id}{\to} H^1(\Omega) \overset{\Grad}{\to} H(\Curl,\Omega) \overset{\Curl}{\to} H(\Div,\Omega) \overset{\Div}{\to} L^2(\Omega),
\end{equation*}
where $\Omega$ is an arbitrary bounded, simply connected domain. 
These complexes and their affiliated discrete spaces are discussed e.g. in \cite{monk,demkowiczbookI,demkowiczbookII}. A set of high order basis functions on simplices based on barycentric coordinates was introduced in \cite{Gopalakrishnan2005}.
The high-order basis function, which we will consider, are based on Jacobi polynomials and were systematically introduced e.g. in \cite{zaglmayrdiss}.
This construction principle was also used in \cite{AINSWORTH20016709,beuchlerCurl,beuchlerdiv} 
to modify the weights of the chosen Jacobi polynomials due to optimization of sparsity pattern and condition number. A connected construction was given by Fuentes and coworkers \cite{Fuentes}.
This ansatz avoids the problem of element orientation and thus can combine different element types, like simplices, hexahedron, and even pyramids.

The aim of this paper is to give new high order dual functions for the $H^1$ and $H(\Curl)$ basis functions defined in \cite{beuchlerpillwein} and \cite{beuchlerCurl}.
Consider an arbitrary basis $\lbrace\phi_i\rbrace_i$, then the respective ($L^2$) dual functions $\lbrace\psi_j\rbrace_j \in L^2(\Omega)$ are given by the relation
\begin{equation*}
    \langle \phi_i,\psi_j \rangle_{L^2} = \delta_{ij},
\end{equation*}
where $\delta_{km}=1$ for $k=m$ and $\delta_{km}=0$ else. Dual functions are used in defining interpolation operators, see e.g. \cite[Chap. 7]{mallet}, or transfer operators between finite elements spaces, see e.g. \cite{Wohlmuth98,Wohlmuth2001b}.
Another application is the determination of starting values for time dependent parabolic problems.

Consider for example some function $t(\vec{x}),$ which we want to approximate by our basis functions $\phi_i,$ i.e.
\begin{equation*}
    t(\vec{x}) = \sum_{i=1}^N \alpha_i \phi_i, 
\end{equation*}
where $N = \dim(\phi).$ 
This best approximation problem is solved by multiplication with test functions $v \in L^2(\Omega)$ and integration thereof. The following linear system needs to be solved:
\begin{equation*}
    \begin{aligned}
        \int_{\Omega} t(\vec{x}) v(\vec{x}) \mathrm{d}\vec{x} = \sum_{i=1}^N\int_{\Omega} \alpha_i \phi_i(\vec{x}) v(\vec{x}) \mathrm{d}\vec{x} \quad \forall v \in L^2(\Omega).
    \end{aligned}
\end{equation*}
The choice $v = \phi_i$ for all $i$ would lead into a dense or almost dense system. On the other hand, the choice of biorthogonal functions leads to a diagonal system. 
There are also purely biorthogonal polynomial systems, see e.g. \cite{Dunkl}.\\
An algorithmic implementation for the $H^1$ dual functions can be found in the finite element software \texttt{Ngsolve} \cite{ngsolve}, which uses a projected based interpolation, see also \cite{demkowiczbookI,demkowiczbookII}.\\
In $2D$ the general problem reads:
\begin{myprob}{}{}
Find $u_{hp} \in \V_{hp}$ such that
\begin{align}
    u_{hp}(\lambda) &= u(\lambda) \quad &&\forall \text{ vertices } \lambda,\\
    \int_E u_{hp} v&= \int_E uv &&\forall v \in \mathcal{P}^{p-2} \text{ or } \mathcal{Q}^{p-2} \quad \forall \text{ edges } E,\\
    \int_{Q/T} u_{hp} v &= \int_{Q/T}uv &&\forall v \in \mathcal{P}^{p-3} \text{ or } \mathcal{Q}^{p-3}\quad\forall \text{ triangles/quadrilaterals } T \label{proInt}.
\end{align}
\end{myprob}
A similar approach is valid in $3D.$ 

Depending on the choice of the primal spaces $\V_{hp}$, the authors present dual basis functions for
$H^1$ and $H(\Curl)$ on several elements. Some cases, in particular on hypercubes in all dimensions
as well as in $H^1$ are quite simple and should be known in the $hp$-fem community.
Nevertheless, we decided to present also these cases since they give the reader an impression
of the construction principle for the difficult ones in $H(\Curl)$ on simplices.
This major novelty of this contribution is the development of closed expressions 
of dual functions in terms of Jacobi polynomials. 
In our publications, the primal basis functions are tensor products of Legendre and integrated
Legendre polynomials for the hypercubes elements. On simplices, we used the functions \cite{beuchlerpillwein} and \cite{beuchlerCurl}.

The outline of this paper is as follows. 
Section \ref{sec:Prem} introduces and summarizes the required properties of Jacobi polynomials.
In section \ref{sec:H1} the dual basis functions for $H^1$ are stated.
The main part of this paper is devoted to section \ref{sec:Hcurl}, where the biorthogonal functions
for $H(\Curl)$ are derived. Section \ref{sec:Con} summarizes the main results of this paper.

\section{\label{sec:Prem}Preliminaries}
We start with the definition of the required orthogonal polynomials.
For $n\in\mathbb{N}$, $\alpha,\beta>-1$, let 
\begin{equation}\label{def:Jacobi}
    P_{n}^{(\alpha,\beta)}(x)=\frac{1}{2^n n!(1-x)^\alpha(1+x)^\beta} 
    \frac{\mathrm{d}^n}{\mathrm{d}x^n} (1-x)^\alpha(1+x)^\beta (x^2-1)^n
\end{equation}
be the $n$-Jacobi polynomial with respect to the weight $\omega(x)=(1-x)^\alpha(1+x)^\beta$.
Moreover, the integrated Jacobi polynomials are given by
\begin{equation}
    \label{def:IntJac}
    \hat{P}_{n}^{\alpha}(x)=\int_{-1}^x  P_{n-1}^{(\alpha,0)} (t) \;\mathrm{d}t, \quad \hat{P}_1^{\alpha}(x) = 1
\end{equation}
for $n\geq 0$ and $\beta=0$.
In the special case $\alpha=\beta=0$, one obtains 
\begin{equation}
    \label{eq:defLeg}
    L_n(x)=P^{(0,0)}_n(x) \quad\textrm{and}\quad \hat{L}_n(x)=\hat{P}_n^0(x)
\end{equation}
the Legendre and integrated Legendre polynomials, respectively.
The Jacobi polynomials form an orthogonal system in the weighted $L_{2,\omega}$ scalar product
\begin{equation}
\label{Orthogonality2}
I_{n,m}^{(\alpha,\beta)}=\int_{-1}^1 \omega(x) P_{n}^{(\alpha,\beta)}(x)P_{m}^{(\alpha,\beta)}(x) \;\mathrm{d}x
= \delta_{nm} \frac{{2}^{\alpha+\beta+1}}{(2n+\alpha+\beta+1} 
\frac{\Gamma(n+\alpha+1)\Gamma(n+\beta+1)}{n! \Gamma(n+\alpha+\beta+1)},
\end{equation}
where $\Gamma(\cdot)$ denotes the Gamma function,
see e.g. \cite{Askey}.
In this publication, the cases $\beta\in\{0,1\}$ are of special interest.
The relation \eqref{Orthogonality2} simplifies to
\begin{equation}
    \label{Orthogonality}
    I_{n,m}^{(\alpha,0)}=\delta_{nm} \frac{2^{\alpha+1}}{(2n+\alpha+1)} \quad\textrm{and}\quad
    I_{n,m}^{(\alpha,1)}=\delta_{nm} \frac{2^{\alpha+2}}{(2n+\alpha+2)} \frac{(n+1)}{(n+\alpha+1)},
\end{equation}
respectively.
Note that the integrated Legendre and Jacobi polynomials can be written as Jacobi polynomials by
using the relations, 
\begin{equation}\label{dual:IntLeg}
    \lhat_i(x) = \frac{(x^2-1)}{2(i-1)} P_{i-2}^{(1,1)}(x)
    \quad\textrm{and}\quad  \phat^{\alpha}_i(x) = \frac{(1+x)}{n} P^{(\alpha - 1,1)}_{i-1}(x),
\end{equation}
see e.g. \cite{Szego}.
\section{\label{sec:H1}Dual function in \texorpdfstring{$H^1$}{}}
We will start by introducing the $H^1$ dual functions for the quadrilateral and for the triangular case.
\subsection{\texorpdfstring{$H^1$}{} dual functions on the quadrilateral}
Consider the master element $\square = (-1,1)^2.$ 
The standard interior basis functions are $u^\square_{ij}(x,y) = \lhat_i(x) \lhat_j(y),$ see \cite{Szabo}. 
To find the dual functions, we use \eqref{dual:IntLeg}.
The dual function $\hat{b}^\square_{kl}(x,y)= b^\square_k(x) b^\square_l(y)$ has to satisfy the relation
\begin{equation}
    \int_\square u^\square_{ij}(x,y) \hat{b}^\square_{kl}(x,y) \dx = \int_{-1}^{1} \lhat_i(x) b^\square_k(x) \dx \int_{-1}^1 \lhat_j(y) b^\square_l(y) \dy = c_{ijkl} \delta_{i,k} \delta_{j,l}.
\end{equation}
By inserting the Jacobi polynomials, we directly notice the orthogonality relation \eqref{Orthogonality} 
for $\alpha=\beta=1$. Thus, one obtains
\begin{equation*}
    \tilde{c} \int_{-1}^{1} (x^2-1) P_{i-2}^{(1,1)}(x) \hat{b}^\square_k(x) \dx \int_{-1}^1 (y^2-1) P_{i-2}^{(1,1)}(y) \hat{b}^\square_l(y) \dy = c_{kl} \delta_{i,k} \delta_{j,l}.
\end{equation*}
This motivates the choice
\begin{equation} \label{eq:dualquad}
    b^\square_k(x) = \chi_k P^{(1,1)}_{k-2}(x) \text{ and } b^\square_l(y) = \chi_l P^{(1,1)}_{l-2}(y).
\end{equation}
with some factors $\chi_l$.
The extension to the three-dimensional case is straightforward.
\subsection{\texorpdfstring{$H^1$ dual function on the simplex}{}}
We now apply the same strategy to the simplicial case. The triangular basis functions, given in \cite{beuchler}, are used
on the triangle $\triangle$ with vertices
$(\sm1,\sm1),(1,\sm1),(0,1)$. For the tetrahedron $\blacktriangle$ with vertices $(\sm1,\sm1,\sm1)$, $(1,\sm1,\sm1)$, $(0,1,\sm1)$ and $(0,0,1),$ the basis functions of \cite{beuchler3} are used.
The interior bubbles are given as
\begin{equation}\label{eq:TrigFace}
\begin{aligned}
u^\triangle_{ij}(x,y)& = \lhat_i\left(\frac{2x}{1-y}\right) \left(\frac{1-y}{2}\right)^i \phat_{j}^{2i}(y)\\
u^\blacktriangle_{ijk}(x,y,z)& = \lhat_i\left(\frac{4x}{1-2y-z}\right) \left(\frac{1-2y-z}{4}\right)^i \phat_{j}^{2i}\left( \frac{2y}{1-z}\right) \left(\frac{1-z}{2}\right)^j \phat_{k}^{2i+2j}(z),
\end{aligned}
\end{equation}
for the triangle and tetrahedron, respectively.

Using \eqref{dual:IntLeg}, $u^\triangle_{ij}(x,y)$ is rewritten as
\[u^\triangle_{ij}(x,y) = c \frac12 \left(\dufx^2-1\right) P_{i-2}^{(1,1)}\left(\frac{2x}{1-y}\right) \left(\frac{1-y}{2}\right)^i \left(\frac{1+y}{2}\right) P_{j-1}^{(2i-1,1)}(y),\]
with some known constant $c.$
As before we search for $\hat{b}^\triangle_{kl}(x,y) = b^\triangle_k\left(\frac{2x}{1-y}\right)b_{kl}^\triangle(y)= b^\triangle_k\left(\eta\right)b_{kl}^\triangle(y),$
where $\eta = \frac{2x}{1-y}.$
Using the Duffy transformation we write down the biorthogonality condition as
\begin{equation}\label{Dual:cond}
\begin{aligned}
    \int_\triangle &u^\triangle_{ij}(x,y) \hat{b}_{kl}^\triangle(x,y) \dx\\ &= c \int_{-1}^1 \left(\frac{\eta^2-1}{2}\right) P_{i-2}^{(1,1)}(\eta) b_k^\triangle(\eta) \mathrm{d}\eta \int_{-1}^{1} \left(\frac{1-y}{2}\right)^{i+1} P^{(2i-1,1)}_{j-1}(y) b^\triangle_{kl} (y) \dy.  
\end{aligned}
\end{equation}
Again this motivates the choice
\begin{equation*}
    b^\triangle_k(\eta) = P_{k-2}^{(1,1)}(\eta)\quad \text{and } b_{kl}^{\triangle}(y) = \left(\frac{1-y}{2}\right)^{k-2} P^{(2k-1,1)}_{l-1}(y).
\end{equation*}
Normalizing the dual functions means that the system matrix is again the identity matrix. We summarize in the following lemma:
\begin{lemma}[\texorpdfstring{$H^1$}{} dual functions on a triangle] \label{Dual:LemTrig}
The interior functions $u^\triangle_{ij}(x,y)$ as in \eqref{eq:TrigFace} and 
\begin{equation}\label{Dual:trig}
    \hat{b}^\triangle_{kl}(x,y) = P_{k-2}^{(1,1)}\left(\frac{2x}{1-y}\right) \left(\frac{1-y}{2}\right)^{k-2} P^{(2k-1,1)}_{l-1}(y) \quad \forall \, k\geq 2, i\geq 1
\end{equation}
are a biorthogonal system on $\triangle,$ i.e. 
\begin{equation*}
    \int_\triangle u_{ij}^\triangle(x,y) \hat{b}^\triangle_{kl}(x,y) \dx \dy = c \delta_{ik} \delta_{jl}
\end{equation*}
On the tetrahedron $\blacktriangle$ the interior functions $u^\blacktriangle_{ijk}$ and 
\begin{equation*}
\begin{aligned}
    b^\blacktriangle_{lnm}&(x,y,z)\\
    &= P^{(1,1)}_{l-2} \left(\frac{4x}{1-2y-z}\right) \left(\frac{1-2y-z}{4}\right)^{(l-2)}P_{n}^{(2i-1,1)}\left(\frac{2z}{1-y}\right)^{(n-1)} P_{m}^{(2i+2j-1,1)}(z)
\end{aligned}
\end{equation*}
are biorthogonal, i.e. 
\begin{equation}\label{Dual:tet}
    \int_{\blacktriangle} u_{ijk}^\blacktriangle(x,y,z) b_{lnm}^\blacktriangle(x,y,z) \dx \dy \dz = \tilde{c} \delta_{il} \delta_{jn} \delta_{km}.
\end{equation}
\end{lemma}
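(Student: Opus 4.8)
The plan is to reduce both statements to the one-dimensional Jacobi orthogonality \eqref{Orthogonality}, following the same route that motivated the ansatz. In each case: (i) rewrite every integrated Legendre/Jacobi factor in $u$ and in the dual function as a Jacobi polynomial times an explicit weight by \eqref{dual:IntLeg}; (ii) push the integral over $\triangle$ (resp.\ $\blacktriangle$) through the Duffy/collapsed-coordinate map, whose Jacobian is an explicit product of powers of the collapse factors; (iii) note that the integrand then factors over the collapsed variables, so the multivariate integral becomes a product of univariate ones; (iv) evaluate each univariate factor by \eqref{Orthogonality}. The only delicate point is the exponent bookkeeping in (iv): in every variable the powers of the collapse factor contributed by $u$, by the dual function and by the Jacobian, together with the single $(1+\cdot)$ released by each $\phat$, must add up to a genuine Jacobi weight $(1-t)^{\alpha}(1+t)^{1}$ with the correct $\alpha$.

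For the triangle, (i)--(iii) are already done in the text up to \eqref{Dual:cond}; it remains to insert $b^\triangle_k(\eta)=P^{(1,1)}_{k-2}(\eta)$ and $b^\triangle_{kl}(y)=\dufy^{\,k-2}P^{(2k-1,1)}_{l-1}(y)$ and evaluate. In the $\eta$-factor, $\tfrac{\eta^2-1}{2}=-\tfrac12(1-\eta)(1+\eta)$ is $-\tfrac12$ times the Jacobi weight for $\alpha=\beta=1$, so that factor is $-\tfrac12 I^{(1,1)}_{i-2,k-2}$ and, by \eqref{Orthogonality}, vanishes unless $i=k$. Hence one only needs the $y$-factor for $i=k$; there the total power of $\dufy$ is $i+(k-2)+1=2i-1$ (the $+1$ being the Duffy Jacobian), and the $(1+y)$ produced by $\phat_j^{2i}$ supplies the remaining $(1+y)^1$, so the $y$-factor is a nonzero constant times $I^{(2i-1,1)}_{j-1,l-1}$, which vanishes unless $j=l$. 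Thus $\int_\triangle u^\triangle_{ij}\,\hat b^\triangle_{kl}\dx\dy=c_{ij}\,\delta_{ik}\delta_{jl}$ with $c_{ij}\neq 0$ given by \eqref{Orthogonality}, and replacing $\hat b^\triangle_{kl}$ by $\hat b^\triangle_{kl}/c_{kl}$ normalises the system.

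For the tetrahedron I would iterate the argument with the two-step collapse $\xi=\tfrac{4x}{1-2y-z}$, $\eta=\tfrac{2y}{1-z}$, $\zeta=z$, using $1-2y-z=(1-z)(1-\eta)$ (hence $\bigl(\tfrac{1-2y-z}{4}\bigr)^{p}=\bigl(\tfrac{1-\eta}{2}\bigr)^{p}\bigl(\tfrac{1-z}{2}\bigr)^{p}$) and the resulting Jacobian $\bigl(\tfrac{1-\eta}{2}\bigr)\bigl(\tfrac{1-z}{2}\bigr)^{2}$. In these coordinates both $u^\blacktriangle_{ijk}$ and $b^\blacktriangle_{lnm}$ split into a $\xi$-, an $\eta$- and a $\zeta$-piece, so the triple integral factors, and I would resolve the three Kronecker deltas in the order $\xi\to\eta\to\zeta$. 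The $\xi$-integral is the clean $\alpha=\beta=1$ orthogonality and forces $l=i$. Only after setting $l=i$ do the two $\eta$-polynomials share the parameter $2i-1$; the accumulated power of $\tfrac{1-\eta}{2}$ then equals $2i-1$ and, with the $(1+\eta)$ from $\phat_j^{2i}$, gives the weight $(2i-1,1)$, so the $\eta$-integral forces $n=j$. After setting $n=j$, the accumulated power of $\tfrac{1-z}{2}$ equals $2i+2j-1$ and, with the $(1+z)$ from $\phat_k^{2i+2j}$, gives the weight $(2i+2j-1,1)$, so the $\zeta$-integral forces $m=k$. Multiplying the three nonzero constants gives \eqref{Dual:tet}, and rescaling $b^\blacktriangle_{lnm}$ normalises the system.

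I do not expect a genuine obstacle; the whole proof is the bookkeeping just sketched, and the structural point that makes it go through is that the factor coming from $\lhat_i$ is a clean $\alpha=\beta=1$ orthogonality which vanishes off the diagonal $i=k$ (resp.\ $i=l$) \emph{unconditionally}, so the collapse exponents in the remaining variables never have to balance unless that first index pair already agrees --- and once it does, they assemble automatically into a Jacobi weight with $\beta=1$. For $\blacktriangle$ this is also why the evaluation must run in the order $\xi\to\eta\to\zeta$: the parameters $2i-1$ and $2i+2j-1$ of the Jacobi polynomials in $\eta$ and $\zeta$ carry indices inherited from the earlier variables, so \eqref{Orthogonality} becomes applicable only in that succession. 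All constants can be made explicit from \eqref{Orthogonality}, which is what the normalisation in the statement requires.
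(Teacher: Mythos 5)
Your proposal is correct and follows essentially the same route as the paper: the paper's proof simply inserts \eqref{Dual:trig} into the already-derived factorized condition \eqref{Dual:cond} and treats the tetrahedron "analogously," which is exactly the Duffy-transform-plus-Jacobi-orthogonality bookkeeping you spell out (your exponent counts $i+k-1=2i-1$ and $2i+2j-1$ and the evaluation order $\xi\to\eta\to\zeta$ all check out). You merely make explicit what the paper leaves implicit, so no further comparison is needed.
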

\begin{proof}
The biorthogonality follows by inserting \eqref{Dual:trig} in \eqref{Dual:cond}. Analogously, the biorthogonality for the tetrahedron follows by \eqref{Dual:tet}.
\end{proof}
\section{\label{sec:Hcurl}Dual functions in \texorpdfstring{$H(\Curl)$}{}}
In a next step, dual functions for $H(\Curl)$ will be derived. We apply the following notation a function $u$ denotes a $H^1$ basis function, $v^{Q,a}$ a  $H(\Curl)$ basis function of type $a = I,II,III$ on a reference element $Q.$ Here $(i,j)$ are the indices of the basis functions, while $(k,l)$ are the indices of the dual functions in $2D,$ or $(i,j,k),$ and $(l,m,n)$ respectively in $3D.$ Furthermore $p$ denotes either the total or the maximal polynomial degree, depending on the reference element.

Finding dual functions for $H(\Curl)$ functions is more complicated. Not only are the shape functions vectorial, but they also appear in multiple types. Our goal is to find all dual functions which are orthogonal to the corresponding type of $H(\Curl)$ and additionally are zero for all other types.
\subsection{Quadrilateral basis}
Recall that the $H(\Curl)$ face shape functions on the quadrilateral $\square=(-1,1)^2$ are
\begin{equation}\label{Dual:QuadCurlBasis}
\begin{aligned}
    v_{ij}^{\square,I}(x,y) &= \Grad \left(\lhat_i(x) \lhat_j(y)\right) = \begin{pmatrix} L_{i-1}(x) \lhat_{j}(y) \\ \lhat_{i}(x) L_{j-1}(y)\end{pmatrix},\\
    v_{ij}^{\square,II}(x,y) &= \begin{pmatrix} L_{i-1}(x) \lhat_{j}(y) \\ -\lhat_{i}(x) L_{j-1}(y)\end{pmatrix},\\
    v_{i}^{\square,III}(x,y) &= \begin{pmatrix} \lhat_{i}(y) \\ 0\end{pmatrix},\quad 
    v_{i+p}^{\square,III}(x,y) = \begin{pmatrix} 0 \\ -\lhat_{i}(x)\end{pmatrix},
\end{aligned}
\end{equation}
for $2 \leq i,j \leq p,$ see \cite{AINSWORTH20016709,Zaglmayr}. 
After linear combination we see, that we can also define the auxiliary interior functions as 
\begin{align}
    \label{eq:HcurlI}\tilde{v}_{ij}^{\square,I}(x,y) &= \Grad(\lhat_i(x)) \lhat_j(y) = \begin{pmatrix} L_{i-1}(x) \lhat_{j}(y) \\0 \end{pmatrix},\quad \text{ for } 1 \leq i \leq p, 2 \leq j \leq p,\\
    \label{eq:HcurlII}\tilde{v}_{ij}^{\square,II}(x,y) &= \lhat_i(x)\Grad(\lhat_j(y))=\begin{pmatrix} 0 \\ \lhat_{i}(x) L_{j-1}(y)\end{pmatrix},\quad \text{ for } 1 \leq j \leq p, 2 \leq i \leq p.
\end{align}
Note that the functions of type $III$ are now a special case of the new type $I$ and $II$ for $i=1$ and $j=1,$ respectively.
By application of \eqref{dual:IntLeg} we find the biorthogonal functions to the auxiliary functions.
\begin{definition}[Dual functions on a quadrilateral]
Let
\begin{equation}\label{eq:BiQuadCurl}
    \begin{aligned}
    \tilde{b}_{kl}^{\square,I}(x,y) &\coloneqq \begin{pmatrix} L_{k-1}(x) P^{(1,1)}_{l-2}(y) \\0 \end{pmatrix},\quad \text{ for } 1 \leq k \leq p, 2 \leq l \leq p,\\
    \tilde{b}_{kl}^{\square,II}(x,y) &\coloneqq \begin{pmatrix} 0 \\ P^{(1,1)}_{k-2}(x) L_{l-1}(y)\end{pmatrix},\quad \text{ for } 1 \leq l \leq p, 2 \leq k \leq p.
\end{aligned}
\end{equation}
\end{definition}
\begin{corollary}\label{Dual:QuadCurl}
For $\tilde{v}^{\square,I}$ and $\tilde{v}^{\square,II}$ as in \eqref{eq:HcurlI} and \eqref{eq:HcurlII} and the functions $\tilde{b}^{\square,I}$ and $\tilde{b}^{\square,II}$ as in \eqref{eq:BiQuadCurl} are biorthogonal, i.e.
\begin{equation*}
    \int_\square \tilde{v}^{\square,\omega_1}_{ij}(x,y) \tilde{b}^{\square,\omega_2}_{kl}(x,y) = c \delta_{ik} \delta_{jl} \delta_{\omega_1,\omega_2}
\end{equation*}

\end{corollary}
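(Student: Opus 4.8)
The plan is to reduce the vector-valued biorthogonality statement to the scalar one-dimensional orthogonality relations for Jacobi polynomials collected in \eqref{Orthogonality}, exactly as in the motivating computations preceding the corollary. First I would observe that the only nonvanishing pairings are between functions of the same type: a type-$I$ auxiliary function $\tilde v^{\square,I}_{ij}$ has a single nonzero component (the first), and a type-$II$ dual function $\tilde b^{\square,II}_{kl}$ has a single nonzero component (the second), so their Euclidean dot product is identically zero; symmetrically for the $I$--$II$ cross term. This establishes the factor $\delta_{\omega_1,\omega_2}$ with no computation beyond inspecting the component structure in \eqref{eq:HcurlI}, \eqref{eq:HcurlII}, and \eqref{eq:BiQuadCurl}.

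Next I would treat the diagonal case $\omega_1=\omega_2=I$ (the case $II$ being completely analogous by the obvious $x\leftrightarrow y$ symmetry). Here the integrand is the product of first components, so
\begin{equation*}
  \int_\square \tilde v^{\square,I}_{ij}\,\tilde b^{\square,I}_{kl}
  = \left(\int_{-1}^1 L_{i-1}(x) L_{k-1}(x)\dx\right)\left(\int_{-1}^1 \lhat_j(y)\,P^{(1,1)}_{l-2}(y)\dy\right).
\end{equation*}
The first factor is $I^{(0,0)}_{i-1,k-1}$, which by \eqref{Orthogonality} equals $\frac{2}{2i-1}\delta_{ik}$. For the second factor I would substitute the identity $\lhat_j(y)=\frac{y^2-1}{2(j-1)}P^{(1,1)}_{j-2}(y)$ from \eqref{dual:IntLeg}, turning it into $\frac{1}{2(j-1)}\int_{-1}^1 (y^2-1)P^{(1,1)}_{j-2}(y)P^{(1,1)}_{l-2}(y)\dy = -\frac{1}{2(j-1)}I^{(1,1)}_{j-2,l-2}$, which by \eqref{Orthogonality2} is a nonzero constant times $\delta_{jl}$. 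Multiplying the two factors yields $c\,\delta_{ik}\delta_{jl}$ with an explicit (index-dependent, but nonzero) constant $c$; if a true identity matrix is wanted one absorbs this constant into the normalization, as remarked in the $H^1$ section.

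I do not anticipate a genuine obstacle here — the statement is essentially a corollary of the one-dimensional orthogonality of Legendre and Jacobi polynomials together with the conversion formulas \eqref{dual:IntLeg}, and the tensor-product structure makes every integral separate. The only point requiring a little care is bookkeeping the index ranges: one must check that whenever $\delta_{ik}\delta_{jl}\neq 0$ the degrees appearing in the Jacobi polynomials are legitimate (i.e. $l-2\geq 0$, matching the stated range $2\leq l\leq p$), so that the orthogonality relation \eqref{Orthogonality2} is being applied in its valid regime; this is immediate from the quantifiers already imposed in \eqref{eq:HcurlI}--\eqref{eq:BiQuadCurl}. I would therefore present the proof as: (i) vanishing of the cross-type pairings by component inspection; (ii) factorization of the same-type integral into a Legendre $\times$ Jacobi product; (iii) invocation of \eqref{Orthogonality2}, \eqref{Orthogonality} and \eqref{dual:IntLeg} to evaluate each factor and collect the Kronecker deltas.

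\begin{proof}
Write $\tilde v^{\square,I}_{ij}=(L_{i-1}(x)\lhat_j(y),0)^\top$ and $\tilde b^{\square,II}_{kl}=(0,P^{(1,1)}_{k-2}(x)L_{l-1}(y))^\top$, and likewise with the roles of the components reversed for $\tilde v^{\square,II}$ and $\tilde b^{\square,I}$. In either mixed case the Euclidean inner product of the integrands vanishes pointwise, so $\int_\square \tilde v^{\square,\omega_1}_{ij}\tilde b^{\square,\omega_2}_{kl}=0$ whenever $\omega_1\neq\omega_2$.

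For $\omega_1=\omega_2=I$ the dot product of the integrands is $L_{i-1}(x)L_{k-1}(x)\lhat_j(y)P^{(1,1)}_{l-2}(y)$, and the integral over $\square$ factors as
\begin{equation*}
  \int_{-1}^1 L_{i-1}(x)L_{k-1}(x)\dx\;\int_{-1}^1\lhat_j(y)P^{(1,1)}_{l-2}(y)\dy.
\end{equation*}
By \eqref{Orthogonality} the first factor equals $\frac{2}{2i-1}\delta_{ik}$. Using the first identity in \eqref{dual:IntLeg}, $\lhat_j(y)=\frac{y^2-1}{2(j-1)}P^{(1,1)}_{j-2}(y)$, the second factor equals
\begin{equation*}
  \frac{1}{2(j-1)}\int_{-1}^1 (y^2-1)P^{(1,1)}_{j-2}(y)P^{(1,1)}_{l-2}(y)\dy
  = -\frac{1}{2(j-1)}\,I^{(1,1)}_{j-2,l-2},
\end{equation*}
which by \eqref{Orthogonality2} is a nonzero constant times $\delta_{jl}$. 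Hence the pairing equals $c\,\delta_{ik}\delta_{jl}$ for a nonzero constant $c$ depending on the indices; the index ranges in \eqref{eq:HcurlI} and \eqref{eq:BiQuadCurl} guarantee $l-2\geq 0$ so that \eqref{Orthogonality2} applies. The case $\omega_1=\omega_2=II$ is identical after exchanging $x$ and $y$. Collecting the three cases gives $\int_\square \tilde v^{\square,\omega_1}_{ij}\tilde b^{\square,\omega_2}_{kl}=c\,\delta_{ik}\delta_{jl}\delta_{\omega_1,\omega_2}$, and after rescaling the $\tilde b$'s the constant may be taken to be $1$.
\end{proof}
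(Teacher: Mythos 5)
Your proposal is correct and follows essentially the same route as the paper: the paper's proof simply defers to the argument of \cref{Dual:LemTrig}, i.e.\ exploit the componentwise/tensor-product structure, rewrite $\lhat_j$ via \eqref{dual:IntLeg}, and invoke the orthogonality relations \eqref{Orthogonality2}--\eqref{Orthogonality}, which is precisely steps (i)--(iii) of your write-up. Your explicit treatment of the cross-type vanishing and the index-range check are just slightly more detailed versions of what the paper leaves implicit.
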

\begin{proof}The proof follows as in \cref{Dual:LemTrig}.\end{proof}
We want to apply the dual functions from \cref{Dual:QuadCurl} to \eqref{Dual:QuadCurlBasis}. For this, we state the following lemma:
\begin{lemma}\label{modBi}
Let the functions $\phi_i^{t}$ and $\psi_j^{r}$ satisfy
\[
\langle \phi_i^{t}, \psi_j^{r} \rangle=d_{i,t}  \delta_{ij} \delta_{tr}
\]
with a constant $d_{i,t}>0$. Moreover, let $\tilde{\phi}_{i}^{t}=\sum_{s=1}^k a_{t,s} \phi_i^s$ and $\tilde{\psi}_{j}^{r}=\sum_{s=1}^k b_{r,s} \psi_j^s$. 
If $ADB^\top=I$, where
$A=[ a_{t,s}]_{t,s=1}^k$, 
$B=[b_{r,s}]_{r,s=1}^k,$ and $D=\mathrm{diag}[d_{i,s}]_{s=1}^k,$ then
\[
\langle \tilde{\phi}_i^{t}, \tilde{\psi}_j^{r} \rangle=\delta_{ij} \delta_{rt}.
\]
\end{lemma}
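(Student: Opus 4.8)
The plan is to expand the bilinear pairing $\langle \tilde\phi_i^t,\tilde\psi_j^r\rangle$ directly using the two linear-combination definitions and then collect the resulting sum into a matrix product. First I would write
\[
\langle \tilde\phi_i^{t},\tilde\psi_j^{r}\rangle
=\Bigl\langle \sum_{s=1}^k a_{t,s}\phi_i^{s},\ \sum_{u=1}^k b_{r,u}\psi_j^{u}\Bigr\rangle
=\sum_{s=1}^k\sum_{u=1}^k a_{t,s}\,b_{r,u}\,\langle \phi_i^{s},\psi_j^{u}\rangle,
\]
using bilinearity of $\langle\cdot,\cdot\rangle$. Next I would substitute the hypothesis $\langle \phi_i^{s},\psi_j^{u}\rangle = d_{i,s}\,\delta_{ij}\,\delta_{su}$, which kills the double sum down to a single sum over $s=u$ and factors out $\delta_{ij}$:
\[
\langle \tilde\phi_i^{t},\tilde\psi_j^{r}\rangle
=\delta_{ij}\sum_{s=1}^k a_{t,s}\,d_{i,s}\,b_{r,s}.
\]

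The key step is then recognizing the remaining sum as the $(t,r)$ entry of the matrix product $A\,D\,B^\top$, where $D=\mathrm{diag}[d_{i,s}]_{s=1}^k$ — note the diagonal entries of $D$ depend on the (now fixed, since $\delta_{ij}$ forces $i=j$) index $i$, which is harmless because for $i\neq j$ the whole expression already vanishes. Indeed $(A D B^\top)_{t,r}=\sum_{s}A_{t,s}D_{s,s}(B^\top)_{s,r}=\sum_s a_{t,s} d_{i,s} b_{r,s}$. Invoking the hypothesis $A D B^\top = I$ gives $\sum_s a_{t,s}d_{i,s}b_{r,s}=\delta_{tr}$, so $\langle \tilde\phi_i^{t},\tilde\psi_j^{r}\rangle=\delta_{ij}\delta_{tr}$, as claimed.

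There is essentially no analytic obstacle here; the only point requiring a word of care is the bookkeeping around the index $i$ in $D$. Since the statement writes $D=\mathrm{diag}[d_{i,s}]_{s=1}^k$ with $i$ appearing as a parameter, the cleanest exposition is to first extract $\delta_{ij}$, observe that the remaining identity need only be checked for $i=j$, and then apply $ADB^\top=I$ for that value of $i$. I would also remark that positivity of $d_{i,s}$ is not actually needed for this implication — it only guarantees that $D$ is invertible, which is relevant when one wants to \emph{construct} suitable $A,B$ (e.g. $B=D^{-\top}A^{-1}$), but is not used in the proof of the stated implication itself.
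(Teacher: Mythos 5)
Your proposal is correct and is precisely the elementary bilinearity-plus-matrix-bookkeeping computation that the paper leaves implicit (its proof consists only of the sentence ``The proof is elementary''). Your side remarks—that $\delta_{ij}$ should be extracted before fixing $i$ in $D$, and that $d_{i,s}>0$ is only needed to make $D$ invertible when constructing $A,B$ rather than in this implication—are accurate and add useful clarity.
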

\begin{proof}
The proof is elementary.
\end{proof}
We apply now lemma \ref{modBi} to find the biorthogonal linear combination of \eqref{eq:BiQuadCurl}. For \eqref{Dual:QuadCurlBasis} it is $A=\begin{bmatrix} 1 & 1 \\ 1 & -1\end{bmatrix}$ and $D$ is given by the relations
\begin{equation*}
\begin{aligned}
   \langle v^{\square,II}_{ij}, \tilde{b}^{\square,I}_{kl} \rangle =  \langle v^{\square,I}_{ij}, \tilde{b}^{\square,I}_{kl} \rangle &= c_I \, \delta_{ik} \delta_{jl} 
   \textrm{ and }
   -\langle v^{\square,II}_{ij}, \tilde{b}^{\square,II}_{kl} \rangle =  \langle v^{\square,I}_{ij}, \tilde{b}^{\square,II}_{kl} \rangle&= c_{II} \, \delta_{ik} \delta_{jl}
\end{aligned}
\end{equation*}
where $c_I\neq0$ and $c_{II}\neq0$ are constants depending on $i,j,k,l.$ 
This motivates the choices
\begin{equation*}
    b^{\square,II}_{ij}(x,y) =  \frac{\tilde{b}_{ij}^{\square,II}(x,y)}{c_{II}} -   \frac{\tilde{b}_{\square,ij}^I(x,y)}{c_I},\quad
    b^{\square,I}_{ij}(x,y) = \frac{\tilde{b}_{ij}^{\square,I}(x,y)}{c_{II}}+ 
    \frac{\tilde{b}_{ij}^{\square,II}(x,y)}{c_I}.
\end{equation*}
Lemma \ref{modBi} guarantees the orthogonality relations
\[
\langle v^{\square,I}_{ij}, b^{II}_{kl} \rangle=0 =\langle v^{\square,I}_{ij}, b^{II}_{kl} \rangle
\qquad \forall i,j,k,l \geq 2.
\]
Orthogonality of $v_i^{III}$ and $v_{i+p}^{III}$ to $b^{I}_{kl}$ and $b^{II}_{kl}$ is trivial.
We summarize in the following lemma.
\begin{lemma}
Let $v^{\square,I}_{ij}, v^{\square,II}_{ij}, v^{\square,III}_{i}$ and $v_{i+p}^{\square,III}$ be as in \eqref{Dual:QuadCurlBasis}, and $\tilde{b}^{\square,I}_{kl}$ and $\tilde{b}^{\square,II}_{kl}$ as in \eqref{eq:BiQuadCurl}. Furthermore, let \begin{equation}\label{Dual:CoefQuad}
    \alpha_{ij} = \frac{1}{8} i(2i-1)(2j-1).  
\end{equation}
Then the functions
\begin{align*}
    b^{\square,I}_{ij}(x,y) &= \alpha_{ij} \, \tilde{b}_{ij}^{\square,I}(x,y) - \alpha_{ji} \, \hat{b}_{ij}^{\square,II}(x,y),\\
    b^{\square,II}_{ij}(x,y) &=   \alpha_{ij} \, \tilde{b}_{ij}^{\square,I}(x,y) + \alpha_{ji} \,\tilde{b}_{ij}^{\square,II}(x,y),\\
    b^{\square,III}_{i}(x,y) &= \tilde{b}^{\square,I}_{1l}(x,y), \quad b^{\square,III}_{i+p}(x,y) = \tilde{b}^{\square,II}_{1l}(x,y),
\end{align*}
are biorthogonal to \eqref{Dual:QuadCurlBasis}.
\end{lemma}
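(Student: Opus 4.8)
The plan is to bootstrap from the auxiliary biorthogonality of \cref{Dual:QuadCurl} by a block-diagonal change of basis, using the recombination lemma \cref{modBi}. The first step is purely algebraic: from \eqref{Dual:QuadCurlBasis}, \eqref{eq:HcurlI}, \eqref{eq:HcurlII} and $\Grad\lhat_i(x)=(L_{i-1}(x),0)^\top$ one reads off, for $2\le i,j\le p$,
\[
v^{\square,I}_{ij}=\tilde v^{\square,I}_{ij}+\tilde v^{\square,II}_{ij},\qquad v^{\square,II}_{ij}=\tilde v^{\square,I}_{ij}-\tilde v^{\square,II}_{ij},
\]
while the type $III$ functions are exactly the boundary members of the auxiliary families, $v^{\square,III}_{i}=\tilde v^{\square,I}_{1i}$ and $v^{\square,III}_{i+p}=-\tilde v^{\square,II}_{i1}$ (using $L_0\equiv1$). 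Hence the whole system \eqref{Dual:QuadCurlBasis} is obtained from $\{\tilde v^{\square,I}_{ij}\}\cup\{\tilde v^{\square,II}_{ij}\}$ by a transformation that is block diagonal: invertible $2\times2$ blocks $A=\left[\begin{smallmatrix}1&1\\ 1&-1\end{smallmatrix}\right]$ on the pairs with $i,j\ge2$, and signed $1\times1$ blocks on the rows and columns carrying a unit index. In particular the two systems have the same cardinality $2p(p-1)$, so it suffices to dualize each block.

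For the $2\times2$ blocks I would first make the diagonal constants of \cref{Dual:QuadCurl} explicit. Writing $\lhat_i(x)=\tfrac{x^2-1}{2(i-1)}P^{(1,1)}_{i-2}(x)$ from \eqref{dual:IntLeg}, using $x^2-1=-(1-x)(1+x)$ together with the Jacobi orthogonality relation \eqref{Orthogonality2} (specialized to $(\alpha,\beta)\in\{(0,0),(1,1)\}$), one obtains
\[
\langle\tilde v^{\square,I}_{ij},\tilde b^{\square,I}_{kl}\rangle=d^{I}_{ij}\,\delta_{ik}\delta_{jl},\qquad
\langle\tilde v^{\square,II}_{ij},\tilde b^{\square,II}_{kl}\rangle=d^{II}_{ij}\,\delta_{ik}\delta_{jl},
\]
with $d^{II}_{ij}=-1/\alpha_{ij}$ and $d^{I}_{ij}=-1/\alpha_{ji}$ for $\alpha_{ij}$ as in \eqref{Dual:CoefQuad}. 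Applying \cref{modBi} with this $A$ and $D=\diag(d^{I}_{ij},d^{II}_{ij})$ then requires $ADB^\top=I$; since $A^2=2I$ this forces $B=\tfrac12 AD^{-1}$, and substituting $D^{-1}=\diag(-\alpha_{ji},-\alpha_{ij})$ yields the dual combinations $b^{\square,I}_{ij},b^{\square,II}_{ij}$ of the statement, with the coefficients $\alpha_{ij},\alpha_{ji}$ of \eqref{Dual:CoefQuad} (up to the global constant that the word ``biorthogonal'' allows here, as in \cref{Dual:LemTrig}). This is the step I expect to be the main, if routine, obstacle: extracting the Jacobi weight factors from \eqref{Orthogonality2}, tracking the transposition $i\leftrightarrow j$ between the two types and the sign from $x^2-1$, and confirming that $B$ indeed has the claimed form.

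It remains to treat the $1\times1$ blocks and the cross terms. For the type $III$ duals take $b^{\square,III}_{i}:=\tilde b^{\square,I}_{1i}$ and $b^{\square,III}_{i+p}:=-\tilde b^{\square,II}_{i1}$ (rescaled by $1/d^{I}_{1i}$, resp.\ $1/d^{II}_{i1}$, for exact normalization); biorthogonality within the type $III$ block and the vanishing of $\langle v^{\square,III}_{i},b^{\square,III}_{j+p}\rangle$ follow at once from \cref{Dual:QuadCurl}, since the first and second vector components have disjoint support. For the remaining cross pairings $\langle v^{\square,a}_{ij},b^{\square,III}_{k}\rangle$ and $\langle v^{\square,III}_{k},b^{\square,a}_{ij}\rangle$ with $a\in\{I,II\}$ and $i,j\ge2$, I would expand both arguments through the identities of the first paragraph and the definitions of $b^{\square,I}_{ij},b^{\square,II}_{ij}$, so that each pairing becomes a finite sum of scalar products of auxiliary functions $\langle\tilde v^{\square,c}_{pq},\tilde b^{\square,d}_{rs}\rangle$; every such term vanishes by \cref{Dual:QuadCurl}, either because $c\ne d$ or because one of the paired first indices equals $1$ while its partner is $\ge2$. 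Assembling the cases $a,b\in\{I,II,III\}$ then gives the full biorthogonality claimed.
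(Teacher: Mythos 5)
Your proposal is correct and follows essentially the same route as the paper: split \eqref{Dual:QuadCurlBasis} into the auxiliary functions \eqref{eq:HcurlI}--\eqref{eq:HcurlII}, invoke the auxiliary biorthogonality of \cref{Dual:QuadCurl}, evaluate the diagonal constants via the Jacobi orthogonality relations, and recombine with \cref{modBi} using $A=\left[\begin{smallmatrix}1&1\\ 1&-1\end{smallmatrix}\right]$, with the type $III$ functions handled as the $i=1$ (resp.\ $j=1$) members of the auxiliary families. One small remark: your explicit computation places $\alpha_{ji}$ in front of $\tilde{b}^{\square,I}_{ij}$ and $\alpha_{ij}$ in front of $\tilde{b}^{\square,II}_{ij}$ (which is consistent with the paper's own evaluation $\langle v^{\square,I}_{ij}, \tilde{b}^{\square,I}_{ij}\rangle=\pm\alpha_{ji}^{-1}$), so it does not literally reproduce the printed combinations, whose $\alpha$-indices appear transposed; this is a typographical inconsistency in the statement rather than a gap in your argument.
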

\begin{proof}
Biorthogonality was already shown above. For the coefficients in \eqref{Dual:CoefQuad} apply the usual orthogonality results \eqref{Orthogonality} for $\alpha=\beta=0$ and $\alpha=\beta=1$.
One obtains
\[
c = \langle v^I_{ij}, b^I_{ij} \rangle = \int_{-1}^1 L_{i-1}(x) L_{i-1}(x) \dx\int_{-1}^{1} \frac{y^2-1}{2(j-1)} P_{j-2}^{(1,1)}(y) P_{j-2}^{(1,1)}(y) \dy =\alpha^{-1}_{ji}. 
\]
The coefficient $\alpha_{ij}$ is computed analogously.
\end{proof}
\subsection{Triangular case}
The triangular case is more complicated. 
Following \cite{beuchlerCurl}, the basis functions of $H(\Curl)$ on the reference triangle $\triangle$ are given as
\begin{equation}\label{Dual:BasisTrig}
\begin{aligned}
    v^{\triangle, I}_{ij}(x,y) &= \Grad(u^{\triangle}_{ij}(x,y))= \Grad(f_i(x,y)) g_{ij}(y) + f_i(x,y) \Grad(g_{ij}(y))\\
    v^{\triangle, II}_{ij}(x,y) &= \Grad(f_i(x,y)) g_{ij}(y) - f_i(x,y) \Grad(g_{ij}(y))\\
    v^{\triangle,III}_{1j}(x,y) &= \Grad(f_1(x,y)) \phat^{3}_j(y),
\end{aligned}
\end{equation}
where $f_i(x,y) = \lhat_i(\frac{2x}{1-y}) \left(\frac{1-y}{2}\right)^i$ and $g_{ij}(y) = \phat^{2i}_j(y)$. 
The gradients of the auxiliary functions $f_i(x,y)$ and $g_{ij}(y)$ can be calculated as
\begin{equation}\label{eq:DualGrad}
\begin{aligned}
\Grad(f_i(x,y)) &= \left(\dfrac{1-y}{2}\right)^{(i-1)} \begin{pmatrix} L_{i-1}\left(\frac{2x}{1-y}\right)  \\ \frac12 L_{i-2}\left(\frac{2x}{1-y}\right)\end{pmatrix}  &&\text{ for } i\geq2,\\
\Grad(g_{ij}(y)) &= \begin{pmatrix} 0 \\ P^{(2i,0)}_{j-1}(y) \end{pmatrix} &&\text{ for } i\geq 2, j \geq 1, \\
\Grad(f_1(x,y)) &=\dfrac{1-y}{4} \begin{pmatrix} 1  \\\frac{1}{2} \frac{2x}{(1-y)}\end{pmatrix}, 
\end{aligned}
\end{equation}
where we simplified the first gradient as shown in \cite{beuchler}. 
We follow the ansatz as described for the quadrilateral case. 
First split $v^{\triangle,I/II/III}_{ij}$ in the functions
\begin{equation}\label{eq:TrigBase}
\begin{aligned}    
    \tilde{v}^{\triangle,I}_{ij}(x,y) &= \Grad(f_i(x,y)) g_{ij}(y) \quad \text{ for } i \geq 1, j\geq 2,\\
    \tilde{v}^{\triangle,II}_{ij}(x,y) &= f_i(x,y) \Grad(g_{ij}(y)) \quad \text{ for } i,j\geq 2.\\
\end{aligned}
\end{equation}
The functions $\lbrace \tilde{v}_{ij}^{\triangle,I}(x,y)\rbrace_{ij}$ and $\lbrace\tilde{v}_{ij}^{\triangle,II}(x,y)\rbrace_{ij}$ are also a basis of the space $H_0(\Curl)$. Next we derive the biorthogonal vectorial functions for $\tilde{v}^{\triangle,I}_{ij}(x,y)$ and $\tilde{v}^{\triangle,II}_{ij}(x,y),$ and then solve the original problem by linear combination, as in the quadrilateral case. \\
Here the main idea of the construction is that we first find vectorial functions which are orthogonal to either $\Grad(g_{ij}(x,y))$ or $\Grad(f_i(x,y))$ and then biorthogonalise those to the respective other basis functions.\\
It is clear that we have the following structure of the orthogonal vectors:
\begin{equation}\label{Dual:TrigStruc}
    \begin{aligned}
    B_{kl}(x,y) &= \begin{pmatrix} b_{kl}(x,y)\\ 0\end{pmatrix} \quad\textrm{and}\quad 
    C_{kl}(x,y) &= \begin{pmatrix} c_{1,kl}(x,y) \\ c_{2,kl}(x,y) \end{pmatrix}.
    \end{aligned}
\end{equation}
In the following we use the notation $\eta = \frac{2x}{1-y}$ and write all functions in dependence of $(\eta,y)$, e.g. write $a(x,y)$ as $a(\eta,y).$ 
The first problem which needs to be solved then reads:\\
\begin{myprob}
Find polynomials $B_{kl}(\eta,y)$ such that
\begin{align*}
    \langle\tilde{v}_{ij}^{\triangle,II}(\eta,y), B_{kl}(\eta,y)\rangle = 0 \text{ and } \langle \tilde{v}^{\triangle,I}, B_{kl}(\eta,y) \rangle = d^{(1)}_{ijkl} \delta_{ik} \delta_{jl}.
\end{align*}
\end{myprob}
Since the first component of $\tilde{v}^{\triangle,II}(\eta,y)$ is zero, $B_{kl}(\eta,y)$ as in \eqref{Dual:TrigStruc} naturally fulfils the first condition. Furthermore, we can assume a tensorial-like structure, i.e.
\begin{equation*}
    B_{kl}(\eta,y) = \begin{pmatrix} b_{k}^{(1)}(\eta) b_{kl}^{(2)}(y) \\ 0 \end{pmatrix}.
\end{equation*}
Now $b^{(1)}_k(z)$ and $b^{(2)}_{kl}(y)$ only needs to fulfil the relationship
\begin{equation*}
    \int_{-1}^{1} L_{i-1}(\eta)~b^{(1)}_k(\eta) \mathrm{d}\eta \int_{-1}^{1} \left(\frac{1-y}{2}\right)^{i} \frac{(1+y)}{2j} P^{(2i-1,1)}_{j-1}(y)~b_{kl}^{(2)}(y) \dy = d^{(1)}_{ijkl} \delta_{ik} \delta_{jl},
\end{equation*}
where we applied the Duffy transformation. This motivates the choice $b^{(1)}_k(\eta) = L_{k-1}(\eta)$ and $b^{(2)}_{kl}(y) = \left(\frac{1-y}{2}\right)^{k-1} P_{l-1}^{(2k-1,1)}(y).$\\
For the second type of dual functions, we need to solve the following problem:
\begin{myprob}\label{Dual:ProbTrig2}
Find polynomials $C_{kl}(\eta,y)$ such that,
\begin{align}
    \langle \tilde{v}_{ij}^{\triangle,I}(\eta,y), C_{kl}(\eta,y)\rangle = 0 \text{ and } \langle \tilde{v}_{ij}^{\triangle,II}(\eta,y), C_{kl}(\eta,y) \rangle = d^{(2)}_{ijkl} \delta_{ik} \delta_{jl}.
\end{align}
\end{myprob}
We will need the following auxiliary lemma.
\begin{lemma}{}{}\label{lemma:IntJacLeg}
For $1 \leq i \leq k,$ the relation
\begin{equation*}
\int_{-1}^{1} L_i(x) P_k^{(1,1)}(x)\dx = \begin{cases} \frac{4}{2+k} & \text{ if } k\geq i \text{ and } (k-i) \operatorname{mod} 2 = 0 \\0 & \text{else} \end{cases}
\end{equation*}
holds.
\end{lemma}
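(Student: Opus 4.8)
The plan is to reduce the integral to a known Jacobi orthogonality relation by rewriting the Legendre polynomial $L_i$ in terms of the weight $(x^2-1)$ and a Jacobi polynomial, or conversely to rewrite $P_k^{(1,1)}$ via the first identity in \eqref{dual:IntLeg}. Concretely, from \eqref{dual:IntLeg} we have $\lhat_{k+2}(x) = \frac{x^2-1}{2(k+1)} P_k^{(1,1)}(x)$, so $P_k^{(1,1)}(x) = \frac{2(k+1)}{x^2-1}\lhat_{k+2}(x)$ is awkward because of the singular factor; instead I would go the other direction and use integration by parts. Write $\int_{-1}^1 L_i(x) P_k^{(1,1)}(x)\dx$ and recall that $(x^2-1)P_{k}^{(1,1)}(x)$ is, up to the constant $2(k+1)$, the integrated Legendre polynomial $\lhat_{k+2}$, whose derivative is $L_{k+1}$. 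So the cleanest route is: integrate by parts once, moving a derivative from an antiderivative of $L_i$ onto $P_k^{(1,1)}$, or moving the derivative structure of $\lhat_{k+2}$ onto $L_i$.

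Here is the concrete chain I would carry out. First, use $L_i(x) = \hat{L}_{i+1}'(x)$ and integrate by parts: $\int_{-1}^1 \hat{L}_{i+1}'(x) P_k^{(1,1)}(x)\dx = \big[\hat{L}_{i+1}(x)P_k^{(1,1)}(x)\big]_{-1}^1 - \int_{-1}^1 \hat{L}_{i+1}(x)\,\frac{\mathrm d}{\mathrm dx}P_k^{(1,1)}(x)\dx$. Since $\hat L_{i+1}(\pm1)=0$ for $i+1\ge 2$, i.e. $i\ge1$ (which holds by hypothesis), the boundary term vanishes. Next, apply the standard derivative formula $\frac{\mathrm d}{\mathrm dx}P_k^{(1,1)}(x) = \frac{k+3}{2}P_{k-1}^{(2,2)}(x)$ and the identity $\hat L_{i+1}(x) = \frac{x^2-1}{2i}P_{i-1}^{(1,1)}(x)$ from \eqref{dual:IntLeg}. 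This turns the integral into $-\frac{k+3}{4i}\int_{-1}^1 (x^2-1)P_{i-1}^{(1,1)}(x)P_{k-1}^{(2,2)}(x)\dx$, which still is not a pure orthogonality integral because the weights $(1,1)$ and $(2,2)$ do not match. So instead I would more efficiently expand $P_{k}^{(1,1)}$ directly against the orthogonal family $\{L_i\}$ in $L^2(-1,1)$ with weight $1$: one needs the connection coefficients in $P_k^{(1,1)} = \sum_{m} c_{k,m} L_m$. Using the known expansion of Jacobi polynomials with parameters $(1,1)$ in the Legendre basis (a terminating hypergeometric sum, or equivalently $P_k^{(1,1)}(x)=\frac{2}{k+2}\big(L_k(x)+L_{k-2}(x)+L_{k-4}(x)+\cdots\big)$ with the sum running down by $2$'s), the inner product $\int_{-1}^1 L_i P_k^{(1,1)}\dx$ picks out the single term $m=i$ when $i$ and $k$ have the same parity and $i\le k$, giving $c_{k,i}\cdot\|L_i\|^2 = \frac{2}{k+2}\cdot\frac{2}{2i+1}$... and here one must be careful: the stated answer is $\frac{4}{k+2}$ independent of $i$, so the coefficient in the telescoping expansion must actually be $\frac{2i+1}{k+2}$ times $\ldots$; I would verify the exact expansion coefficients against \eqref{Orthogonality} with $\alpha=\beta=0$ so that the $i$-dependence cancels to produce the clean constant $\frac{4}{2+k}$.

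The main obstacle is getting the connection-coefficient expansion of $P_k^{(1,1)}$ in terms of Legendre polynomials exactly right, including the parity condition and the precise normalization, so that the $i$-dependence cancels and one lands on $\frac{4}{2+k}$ rather than something $i$-dependent; the parity statement $(k-i)\bmod 2 = 0$ and the vanishing in the complementary case are immediate from that expansion since $P_k^{(1,1)}$ has the parity of $k$. Everything else — the boundary terms, the range $1\le i\le k$ — is routine. Once the expansion $P_k^{(1,1)}(x) = \frac{2}{k+2}\sum_{\substack{0\le m\le k\\ m\equiv k\ (2)}} (2m+1)\,\chi_m\,L_m(x)$ is pinned down with the correct $\chi_m$ (which I expect to be $1$), the result follows by orthogonality \eqref{Orthogonality} in one line.
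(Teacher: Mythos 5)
Your final route is correct, but it is genuinely different from the paper's. Both arguments pivot on the same identity $P_k^{(1,1)}(x)=\frac{2}{k+2}\,\frac{\mathrm{d}}{\mathrm{d}x}L_{k+1}(x)$, but the paper uses it to integrate by parts \emph{once}, moving the derivative off $L_{k+1}$: the remaining integral $\int_{-1}^1 L_i'(x)L_{k+1}(x)\dx$ dies by orthogonality since $\deg L_i' = i-1 < k+1$, and the whole value comes from the boundary term $\frac{2}{2+k}\bigl[L_i(x)L_{k+1}(x)\bigr]_{-1}^{1} = \frac{2}{2+k}\bigl(1-(-1)^{i+k+1}\bigr) = \frac{4}{2+k}$ when $i$ and $k$ share parity. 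You instead expand $L_{k+1}'$ in the Legendre basis, $L_{k+1}'=\sum_{m\le k,\ m\equiv k\ (2)}(2m+1)L_m$, and read off the answer from \eqref{Orthogonality} with $\alpha=\beta=0$; your expected normalization $\chi_m=1$ is indeed correct, and the factor $(2i+1)$ cancels against $\|L_i\|^2=\frac{2}{2i+1}$ to give the $i$-independent value $\frac{4}{2+k}$. What the paper's route buys is self-containedness: it needs only $L_n(\pm1)=(\pm1)^n$ and plain orthogonality, with no connection-coefficient formula to quote or verify. What your route buys is that the parity condition, the restriction $i\le k$, and the value all drop out of a single displayed expansion at once. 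Your first attempt (writing $L_i=\lhat_{i+1}'$ and pushing the derivative onto $P_k^{(1,1)}$) was rightly abandoned --- it moves the derivative in the unprofitable direction --- and the only remaining work in your sketch is the standard verification of the expansion coefficients, which you have correctly reduced to a known classical identity.
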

\begin{proof}
A classical result of Jacobi polynomials states that $P_n^{(\alpha,\alpha)}(x)$ is even if $n$ is even, and it is odd if $n$ is odd. Thus, the relation is trivial if $k$ and $i$ have a different parity.\\
In the following, assume $i$ and $k$ have the same parity. Let $I_{ik} \coloneqq \int_{-1}^{1} L_i(x) P_k^{(1,1)}(x)\dx.$ If $i>k$ it follows that $I_{ik}$ is zero, due to the orthogonality condition of $L_i(x).$
Now assume $k \geq i.$ By partial integration it follows
\begin{align*}
    I_{ik} = \int_{-1}^{1} L_i(x) P_k^{(1,1)}(x)\dx &= \int_{-1}^{1} L_i(x) \frac{2}{2+k}\frac{\mathrm{d}}{\dx}L_{k+1}(x)\dx\\
    &= \frac{2}{2+k}\left[ L_i(x) L_{k+1}(x)\right]\big\vert_{-1}^{1} - \int_{-1}^1 \left(\frac{\mathrm{d}}{\dx} L_i(x)\right) L_{k+1}(x) \dx\\
    &= \frac{4}{2+k},
\end{align*}
where the last integral vanishes due to the orthogonality condition of $L_{k+1}(x)$ and $[L_i(x)L_{k+1}(x)]\big\vert_{-1}^1 = 2$ due to the odd parity.
\end{proof}
\begin{figure}
    \centering
    \begin{subfigure}[t]{0.25\textwidth}
    \includegraphics[scale=0.25]{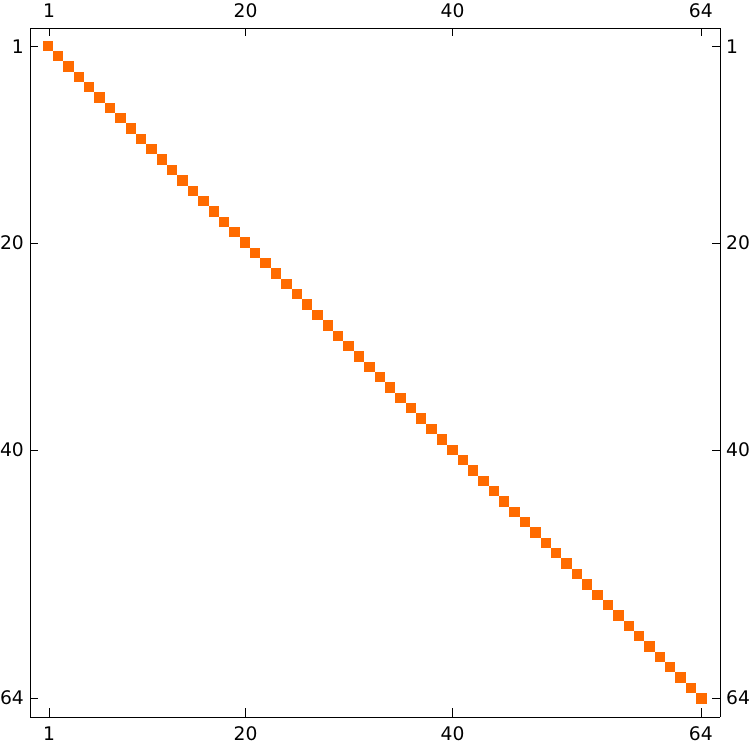}
    \caption{$L^2$-scalar product of $B_{kl},C_{kl}$ and auxiliary functions.}\label{fig:biortha}
    \end{subfigure}
    \begin{subfigure}[t]{0.25\textwidth}
    \includegraphics[scale=0.25]{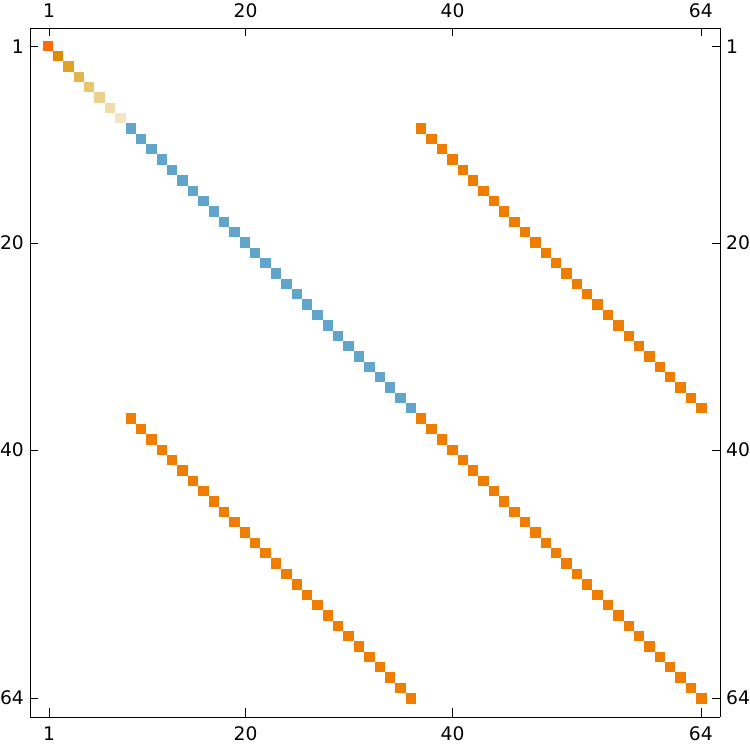}
    \caption{$L^2$-scalar product of $B_{kl},C_{kl}$ and $H(\Curl)$ basis functions.}\label{fig:biorthb}
    \end{subfigure}
    \caption{Biorthogonal sparsity pattern for $p=6$}
    \label{fig:biorth}
\end{figure}
For \cref{Dual:ProbTrig2}, we start with the biorthogonality condition. We again assume a tensorial-like structure, i.e.
        $C_{kl}(\eta,y) =  c_{kl}^{(3)}(y) \begin{pmatrix}c_{k}^{(1)}(\eta) & c_{k}^{(2)}(\eta)\end{pmatrix}^\top$.
The condition is
\begin{align*}
    \langle \tilde{v}^{\triangle,II}_{ij}(\eta,y), C_{kl}(\eta,y) \rangle &= \int_{-1}^1 \lhat_i(\eta) c_{k}^{(2)}(\eta) \deta \int_{-1}^1 \left(\frac{1-y}{2}\right)^{i+1} P^{(2i,0)}_{j-1}(y) c_{kl}^{(3)}(y) \dy\\&= d^{(2)}_{ijkl} \delta_{ik}\delta_{jl}.
\end{align*}
This leads to the choice $c_{k}^{(2)}(\eta) = \kappa P^{(1,1)}_{k-2}(\eta)$ and $c^{(3)}_{kl}(y) = \left(\frac{1-y}{2}\right)^{k-1}P^{(2k,0)}_{l-1}(y),$ where $\kappa$ is some constant. Condition   $ \langle \tilde{v}_{ij}^{\triangle,I},C_{kl}\rangle =0 $
is satisfied, if
     $\langle \Grad f_i(\eta,y) , C_{kl}(\eta) \rangle = 0$.
Since both components of $\Grad f_{i}$ depend on $\left(\frac{1-y}{2}\right)^{i-1} \phat_j^{2i,0}(y),$ the orthogonality relation reduces to
\begin{equation*} \int_{-1}^1 L_{i-1}(\eta)c^{(1)}_{k}(\eta) \deta + \int_{-1}^{1} \frac12 L_{i-2}(\eta) c_{k}^{(2)}(\eta) \deta = 0. \end{equation*}
Due to lemma \ref{lemma:IntJacLeg} this condition is fulfilled if $c_{k}^{(1)}(\eta) = (2+k-1) P_{k-1}^{(1,1)}(\eta)$ \\and ${c_{k}^{(2)}(\eta) = -2(2+k-2) P_{k-2}^{(1,1)}(\eta).}$ Now we have found  biorthogonal functions for $\tilde{v}_{ij}^{\triangle,I}$ and $\tilde{v}_{ij}^{\triangle,II},$ which results in a diagonal matrix which can be seen in \Cref{fig:biortha}. \\
\begin{definition}[Dual functions on a triangle]
For $k \geq 2, l \geq 1$ define 
\begin{equation}\label{Tab:DualTrig}
    \begin{aligned}
B_{kl}(x,y) &\coloneqq \begin{pmatrix}L_{k-1}\dufx\left(\frac{1-y}{2}\right)^{k-1}P_{l-1}^{2k-1,1}(y)\\0\end{pmatrix},\\
C_{kl}(x,y) &\coloneqq \begin{pmatrix}(2+k-1) P_{k-1}^{(1,1)}\dufx\left(\frac{1-y}{2}\right)^{k-1} P_{l-1}^{(2k,0)}(y)\\ -2(2+k-2) P_{k-2}^{(1,1)}\dufx\left(\frac{1-y}{2}\right)^{k-1} P_{l-1}^{(2k,0)}(y)\end{pmatrix}.
    \end{aligned}
\end{equation}
\end{definition}
With this choice we have proven the following corollary.
\begin{corollary}\label{Dual:TrigCor}
Let $\tilde{v}^{\triangle,I}_{ij},\tilde{v}^{\triangle,II}_{ij}$ be defined by \eqref{eq:TrigBase} and the functions $B_{kl}$ and $C_{kl}$ be defined as in \cref{Tab:DualTrig}. Then these functions are biorthogonal.
\end{corollary}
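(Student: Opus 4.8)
The statement just gathers the four biorthogonality relations assembled in the paragraphs preceding \cref{Tab:DualTrig}, so the proof will be a verification. The plan is to pass to the reference square by the Duffy substitution $\eta=\frac{2x}{1-y}$, for which $\dx\dy=\dufy\,\deta\,\dy$, so that every pairing $\langle\tilde v^{\triangle,\omega_1}_{ij},B_{kl}\rangle$ or $\langle\tilde v^{\triangle,\omega_1}_{ij},C_{kl}\rangle$ splits into a one-dimensional integral in $\eta$ times one in $y$. Two of the four relations are then immediate from the vector structure: since $\Grad g_{ij}$ has vanishing first component by \eqref{eq:DualGrad}, $\tilde v^{\triangle,II}_{ij}$ is purely vertical while $B_{kl}$ is purely horizontal, so $\langle\tilde v^{\triangle,II}_{ij},B_{kl}\rangle=0$. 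For $\langle\tilde v^{\triangle,I}_{ij},B_{kl}\rangle$ only the first components pair, and after Duffy this equals $\int_{-1}^1 L_{i-1}(\eta)L_{k-1}(\eta)\,\deta$ times a $y$-integral; the $\eta$-factor is $\propto\delta_{ik}$ by \eqref{Orthogonality} with $\alpha=\beta=0$, and for $i=k$ one rewrites $g_{ij}=\phat^{2i}_j$ through \eqref{dual:IntLeg} as a multiple of $(1+y)P^{(2i-1,1)}_{j-1}(y)$, collects the powers of $\dufy$ coming from $\Grad f_i$, from $g_{ij}$ and from the Jacobian into the weight $(1-y)^{2i-1}(1+y)$, and applies \eqref{Orthogonality} once more (with $\alpha=2i-1$, $\beta=1$) to get $\propto\delta_{jl}$ with a nonzero constant. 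For the boundary index $i=1$ the generic formula for $\Grad f_i$ does not apply, but the first component of $\Grad f_1$ is a multiple of $L_0(\eta)$, so $\langle\tilde v^{\triangle,I}_{1j},B_{kl}\rangle=0$ for $k\ge2$ by Legendre orthogonality.

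For $\langle\tilde v^{\triangle,II}_{ij},C_{kl}\rangle$ only the second components pair, and inserting $\lhat_i(\eta)=\frac{\eta^2-1}{2(i-1)}P^{(1,1)}_{i-2}(\eta)$ from \eqref{dual:IntLeg} turns the $\eta$-factor into a multiple of $\int_{-1}^1(1-\eta^2)P^{(1,1)}_{i-2}(\eta)P^{(1,1)}_{k-2}(\eta)\,\deta$, which is $\propto\delta_{ik}$ by \eqref{Orthogonality} with $\alpha=\beta=1$; for $i=k$ the power of $\dufy$ accumulated from $f_i$ and the Jacobian is exactly $2i$, so the $y$-factor $\int_{-1}^1\dufy^{2i}P^{(2i,0)}_{j-1}(y)P^{(2i,0)}_{l-1}(y)\,\dy$ is $\propto\delta_{jl}$, again nonzero. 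The only genuinely nontrivial relation is the cross relation $\langle\tilde v^{\triangle,I}_{ij},C_{kl}\rangle=0$. Here both components of $\tilde v^{\triangle,I}_{ij}=\Grad(f_i)\,g_{ij}$ carry the same $y$-profile $\dufy^{i-1}g_{ij}(y)$ (for $i\ge2$), and both components of $C_{kl}$ carry $\dufy^{k-1}P^{(2k,0)}_{l-1}(y)$, so the $y$-integral factors out of the pairing and does not vanish in general; the cancellation has to come from the $\eta$-integral
\begin{equation*}
\int_{-1}^1\!\Bigl(L_{i-1}(\eta)\,c^{(1)}_k(\eta)+\tfrac12 L_{i-2}(\eta)\,c^{(2)}_k(\eta)\Bigr)\deta
=(k+1)\!\int_{-1}^1\! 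L_{i-1}P^{(1,1)}_{k-1}\,\deta-k\!\int_{-1}^1\! L_{i-2}P^{(1,1)}_{k-2}\,\deta .
\end{equation*}
By \cref{lemma:IntJacLeg} the two moments equal $\tfrac{4}{k+1}$, resp.\ $\tfrac{4}{k}$, when $k\ge i$ and $k-i$ is even, and both vanish otherwise, so the right-hand side is $(k+1)\tfrac{4}{k+1}-k\tfrac{4}{k}=0$ in every case.

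The main obstacle is exactly this cancellation, and it explains why \cref{lemma:IntJacLeg} was isolated first: the coefficients $2+k-1$ and $-2(2+k-2)$ in \cref{Tab:DualTrig} are tuned so that the two Jacobi--Legendre moments cancel identically, uniformly in $i$. A little extra care is needed at the boundary indices: for $i=1$, where $\Grad f_1\propto(1,\tfrac{\eta}{2})$, the relevant $\eta$-integral is instead $\int_{-1}^1\bigl(c^{(1)}_k(\eta)+\tfrac{\eta}{2}\,c^{(2)}_k(\eta)\bigr)\deta=(k+1)\int_{-1}^1 L_0 P^{(1,1)}_{k-1}\,\deta-k\int_{-1}^1 L_1 P^{(1,1)}_{k-2}\,\deta$, and \cref{lemma:IntJacLeg} (whose partial-integration proof covers the index $0$ as well) again produces $0$; the case $i=2$, where $L_{i-2}=L_0$, is handled by the same identity. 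Collecting the four relations gives $\langle\tilde v^{\triangle,\omega_1}_{ij},D^{\omega_2}_{kl}\rangle=c\,\delta_{ik}\delta_{jl}\delta_{\omega_1\omega_2}$ with $D^{I}=B$, $D^{II}=C$ and a nonzero constant $c=c(i,j)$, i.e.\ precisely the diagonal pattern of \cref{fig:biortha}.
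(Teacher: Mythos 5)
Your proposal is correct and follows essentially the same route as the paper, whose ``proof'' of \cref{Dual:TrigCor} is precisely the construction preceding it: Duffy transformation, tensorization into one-dimensional integrals, the structural vanishing of $\langle\tilde v^{\triangle,II}_{ij},B_{kl}\rangle$ from the vector components, the standard Jacobi orthogonality relations \eqref{Orthogonality}, and the cancellation $(k+1)\tfrac{4}{k+1}-k\tfrac{4}{k}=0$ from \cref{lemma:IntJacLeg} for the cross term $\langle\tilde v^{\triangle,I}_{ij},C_{kl}\rangle$. Your explicit treatment of the boundary indices $i=1$ and $i=2$ is a welcome addition that the paper leaves implicit.
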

Obviously those are not biorthogonal to the basis $v^{\triangle,I}_{ij}, v^{\triangle,II}_{ij}$ and $v^{\triangle,III}_{1j},$  which can be seen in \Cref{fig:biorthb}. Thus, we derive the biorthogonal functions by linear combination, as in the quadrilateral case. 
\begin{lemma} Let $\alpha_1 = \frac{1}{8} (2i-1)(2j+2i-1)(j+2i-1)$, 
    $\alpha_2 = \frac{1}{16} (2i-1)(2j+2i-1)$ and
    $\alpha_3 = \frac{1}{16}(2j+2)(j+2)$.
Then, for $2 \leq i,k \leq p, 1\leq j,l \leq p,$ and $i+j, k+l \leq p,$ the functions $v^{\triangle,I}_{ij},v^{\triangle,II}_{ij}$ and $v^{\triangle,III}_{1j}$ as in \eqref{Dual:BasisTrig} are biorthogonal to 
\begin{equation*}
\begin{aligned}
    b^{\triangle,I}_{kl}(x,y) &= -\frac12 (\alpha_1 B_{kl}(x,y) + \alpha_2 C_{kl}(x,y)),\\
    b^{\triangle,II}_{kl}(x,y) &= \frac12 (\alpha_1 B_{kl}(x,y) - \alpha_2 C_{kl}(x,y)),\\
b^{\triangle,III}_{1l}(x,y) &= \alpha_3 B_{1l}(x,y),
\end{aligned}
\end{equation*}
where $B_{kl}$ and $C_{kl}$ are given in \cref{Tab:DualTrig}. 
\end{lemma}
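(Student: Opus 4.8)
The plan is to reduce the statement to two ingredients already available: the auxiliary biorthogonality of \cref{Dual:TrigCor} and the elementary change‑of‑basis \cref{modBi}. From \eqref{Dual:BasisTrig} and \eqref{eq:TrigBase} we have, on the block spanned by types $I$ and $II$,
\[
v^{\triangle,I}_{ij}=\tilde v^{\triangle,I}_{ij}+\tilde v^{\triangle,II}_{ij},\qquad v^{\triangle,II}_{ij}=\tilde v^{\triangle,I}_{ij}-\tilde v^{\triangle,II}_{ij},
\]
which is an invertible linear combination with matrix $A=\begin{bmatrix}1&1\\1&-1\end{bmatrix}$, and the dual functions $b^{\triangle,I}_{kl},b^{\triangle,II}_{kl}$ are built as the analogous $\pm$‑combinations of $B_{kl},C_{kl}$. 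By \cref{Dual:TrigCor} the pairing of $\{\tilde v^{\triangle,I}_{ij}\},\{\tilde v^{\triangle,II}_{ij}\}$ with $\{B_{kl}\},\{C_{kl}\}$ is diagonal, say with entries $d^{(1)}_{ij}$ and $d^{(2)}_{ij}$; so by \cref{modBi} it only remains to (i) compute $d^{(1)}_{ij},d^{(2)}_{ij}$, (ii) invert the $2\times2$ relation $A\,D\,B^\top=I$ with $D=\diag(d^{(1)}_{ij},d^{(2)}_{ij})$ to get the coefficients $\alpha_1,\alpha_2$ and the stated $b^{\triangle,I}_{kl},b^{\triangle,II}_{kl}$, and (iii) treat the type‑$III$ functions separately.

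For (i) I would substitute the Duffy‑split tensor forms \eqref{eq:TrigBase}, the gradients \eqref{eq:DualGrad} and the duals of \cref{Tab:DualTrig} into the $L^2(\triangle)$ inner product and push the Duffy weight $\tfrac{1-y}{2}$ through, exactly as in the derivation preceding \cref{Dual:TrigCor}. Each pairing then factors into a pure Legendre integral in $\eta$ — evaluated by \eqref{Orthogonality} with $\alpha=\beta=0$ (for the $B$‑type simply $\int_{-1}^1 L_{i-1}^2\,\mathrm d\eta=\tfrac{2}{2i-1}$, for the $C$‑type the two‑term combination produced by \cref{lemma:IntJacLeg}) — times a weighted Jacobi integral in $y$ of the form $\int_{-1}^1(1-y)^{2i-1}(1+y)\,\bigl(P^{(2i-1,1)}_{j-1}\bigr)^2\,\mathrm dy=I^{(2i-1,1)}_{j-1,j-1}$, evaluated by \eqref{Orthogonality2}, together with the extra factor $\tfrac{1+y}{2j}$ coming from rewriting $g_{ij}$ via \eqref{dual:IntLeg}. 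Collecting the Gamma‑function factors and simplifying at $\beta=1$ reproduces $d^{(1)}_{ij},d^{(2)}_{ij}$; inverting the $2\times2$ system (note $\det A=-2$, which is the origin of the prefactor $\tfrac12$) then yields $\alpha_1,\alpha_2$ as in the lemma and the claimed $b^{\triangle,I}_{kl}=-\tfrac12(\alpha_1 B_{kl}+\alpha_2 C_{kl})$, $b^{\triangle,II}_{kl}=\tfrac12(\alpha_1 B_{kl}-\alpha_2 C_{kl})$.

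For (iii) I would first dispatch the cross terms. Since $B_{kl}$ has a vanishing second component and $\int_{-1}^1 L_{k-1}(\eta)\,\mathrm d\eta=0$ for $k\ge2$, the pairings $\langle\tilde v^{\triangle,II}_{ij},B_{kl}\rangle$ and $\langle v^{\triangle,III}_{1j},B_{kl}\rangle$ vanish for $k\ge2$; and $\langle v^{\triangle,III}_{1j},C_{kl}\rangle=0$ because $C_{kl}$ was constructed (via \cref{lemma:IntJacLeg}) precisely so that $\langle\Grad f_i,C_{kl}\rangle=0$ for every $i$ — the case $i=1$ has to be rechecked by hand, since $\Grad f_1$ in \eqref{eq:DualGrad} is not of the generic $\left(\tfrac{1-y}{2}\right)^{i-1}\!\bigl(L_{i-1},\tfrac12L_{i-2}\bigr)^\top$ shape, but the required identity again follows from the parity/partial‑integration argument behind \cref{lemma:IntJacLeg}. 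Dually, $b^{\triangle,III}_{1l}=\alpha_3 B_{1l}$ is supported in the first component only, so by \cref{Dual:TrigCor} it is orthogonal to $v^{\triangle,I}_{ij},v^{\triangle,II}_{ij}$ for all $i\ge2$. What is left is the single normalisation $\langle v^{\triangle,III}_{1j},B_{1l}\rangle$: after the Duffy transformation this collapses to a weighted Jacobi integral, and — rewriting $\hat P^{3}_j$ by \eqref{dual:IntLeg} and applying \eqref{Orthogonality2} — it yields exactly $\alpha_3^{-1}$ with $\alpha_3=\tfrac1{16}(2j+2)(j+2)$, which finishes the proof.

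The hard part will be step (iii): unlike the interior types $I$ and $II$, the genuinely lowest‑order type‑$III$ family does not inherit its orthogonality and normalisation from the $i\ge2$ computation, because $\Grad f_1$ breaks the tensor pattern used there; the two or three special one‑dimensional integrals must be done directly, and it is there that \cref{lemma:IntJacLeg} (and the even/odd symmetry of $P^{(\alpha,\alpha)}_n$ underlying it) is indispensable. Everything else is bookkeeping of Gamma‑function constants through \eqref{Orthogonality2}–\eqref{Orthogonality} together with the $2\times2$ linear algebra of \cref{modBi}, which is routine but lengthy.
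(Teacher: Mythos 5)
Your proposal follows essentially the same route as the paper's proof: the same splitting $v^{\triangle,I/II}_{ij}=\tilde v^{\triangle,I}_{ij}\pm\tilde v^{\triangle,II}_{ij}$, the application of \cref{modBi} with $A=\begin{bmatrix}1&1\\1&-1\end{bmatrix}$, the evaluation of the diagonal pairings $\langle\tilde v^{\triangle,I}_{ij},B_{ij}\rangle$ and $\langle\tilde v^{\triangle,II}_{ij},C_{ij}\rangle$ via \eqref{Orthogonality}, and the separate treatment of the type-$III$ functions using the vanishing of $\int_{-1}^1 L_{k-1}$ and \cref{lemma:IntJacLeg}. Your explicit flagging of the $i=1$ case (where $\Grad f_1$ breaks the generic tensor pattern) is if anything slightly more careful than the paper's one-line dismissal, but it is the same argument.
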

\begin{proof}
Since we have shown biorthogonality of $\tilde{v}_{ij}^{\triangle,I},\tilde{v}_{ij}^{\triangle,II}$ to $B_{kl}, C_{kl}$ in \cref{Dual:TrigCor}, we get
\begin{equation*}
    \begin{aligned}
    \langle v^{\triangle,I}_{ij}, B_{kl}\rangle = \langle \tilde{v}^{\triangle,I}_{ij}, B_{kl} \rangle + \langle \tilde{v}^{\triangle,II}_{ij}, B_{kl} \rangle = \langle \tilde{v}^{\triangle,I}_{ij}, B_{kl} \rangle = c \delta_{ik} \delta_{jl}\\
    \langle v^{\triangle,I}_{ij}, C_{kl}\rangle = \langle \tilde{v}^{\triangle,I}_{ij}, C_{kl} \rangle + \langle \tilde{v}^{\triangle,II}_{ij}, C_{kl} \rangle = \langle \tilde{v}^{\triangle,II}_{ij}, C_{kl} \rangle = \tilde{c} \delta_{ik} \delta_{jl}.
    \end{aligned}
\end{equation*}
As in the quadrilateral case, lemma \ref{modBi} with the matrix $A=\begin{bmatrix} 1& 1 \\ 1 & -1\end{bmatrix}$ is applied. For the diagonal matrix $D$ in this lemma,
we have to compute $\langle \tilde{v}^{\triangle,I}_{ij}, B_{ij} \rangle $ and
$\langle \tilde{v}^{\triangle,II}_{ij}, C_{ij} \rangle $
for $(i,j) = (k,l)$. Using \eqref{Orthogonality} with $\alpha=\beta=0$ and $(\alpha,\beta)=(2i-1,0)$  
one obtains
\begin{equation*}
    \begin{aligned}
        \langle \tilde{v}^{\triangle,I}_{ij}, B_{ij} \rangle &= \int_{-1}^{1} (L_{i-1}(\eta))^2 \deta\int_{-1}^{1} \left(\frac{1-y}{2}\right)^{2i-1} \frac{(1+y)}{j} (P_{j-1}^{(2i-1,1)}(y))^2 \dy\\
        &= \frac{8}{(2i-1)(2j+2i-1)(j+2i-1)}.
    \end{aligned}
\end{equation*}
This gives $\alpha_1 = \frac{1}{8} (2i-1)(2j-2i-1)(j+2i-1)$. 
Analogously \eqref{Orthogonality} with $\alpha=\beta=1$ and $(\alpha,\beta)=(2i,0)$ gives
\begin{equation*}
    \begin{aligned}
        \langle \tilde{v}^{\triangle,II}_{ij}, C_{ij} \rangle &= -2i \int_{-1}^{1}\left(\frac{\eta^2-1}{2(i-1)}\right) (P^{(1,1)}_{i-2}(\eta))^2 \deta \int_{-1}^{1} \left(\frac{1-y}{2}\right)^{2i} P^{(2i,0)}_{j-1}(y) \dy\\
        &= \frac{16}{(2i-1)(2j+2i-1)},
    \end{aligned}
\end{equation*}
implies $\alpha_2 = \frac{1}{16}(2i-1)(2j+2i-1).$ The only thing remaining to show is that $B_{kl},C_{kl}$ are orthogonal to $v^{\triangle,III}_{1,j}$ and that $b_{1l}^{\triangle,III}$ is orthogonal to $\tilde{v}^{\triangle,I}_{ij}$ and $\tilde{v}^{\triangle,II}_{ij}.$
Indeed,
\begin{equation*}\langle v^{\triangle,III}_{1j},B_{kl}\rangle = \int_{-1}^{1} 1 \cdot L_{k-1}(\eta) \deta \int_{-1}^{1} \frac12 \left(\frac{1-y}{2}\right)^{k} P_{l-1}^{(2k-1,1)}(y) = 0, \end{equation*}
due to the orthogonality of $L_{k-1}(\eta)$ for all $k\geq 2.$ For the orthogonality of
$\langle v^{\triangle,III}_{1j},C_{kl}\rangle$.
we can apply \cref{lemma:IntJacLeg}.
On the other hand, orthogonality of $b_{1l}^{\triangle,III}$ follows from \cref{Dual:TrigCor}.
\end{proof}
Since all basis functions are properly scaled, the next corollary follows directly.
\begin{corollary}\label{dual:nedcor}
Let $i,k \geq 2$ and $j,l \geq 1.$ Furthermore let $v^{\triangle,I}_{ij}, v^{\triangle,II}_{ij}, v^{\triangle,III}_{1j}$ be the basis of the $H(\Curl)$ interior functions and let $b^{\triangle,I}_{kl},b^{\triangle,II}_{kl},b^{\triangle,III}_{1l}$ be the corresponding normalized dual face functions, then holds for the entries of the element Matrix $G$ that
\begin{equation*}
    G_{ij,kl} = \langle v^{\triangle,T_1}_{ij}(x,y), b_{kl}^{\triangle,T_2}(x,y) \rangle = \delta_{i,k} \delta_{j,l} \delta_{T_1,T_2}.
\end{equation*}
\end{corollary}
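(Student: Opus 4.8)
The plan is to read off \cref{dual:nedcor} from the preceding lemma, the only additional point being that the normalising constants $\alpha_1,\alpha_2,\alpha_3$ have been chosen so that the diagonal of the Gram matrix equals $1$. First I would record that the preceding lemma already yields $G_{ij,kl}=0$ whenever $(i,j)\neq(k,l)$ or $T_1\neq T_2$: the $\{I,II\}$ block follows from \cref{Dual:TrigCor} together with \cref{modBi}, and the vanishing of the interaction with type $III$ uses the orthogonality of $L_{k-1}$ on the $\eta$--variable and \cref{lemma:IntJacLeg}. So $G$ is diagonal on the index set $i,k\geq 2$, $j,l\geq 1$, $i+j,k+l\leq p$ (the row/column $i=1$ being the type-$III$ one), and it remains only to evaluate the diagonal entries.

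Second, I would compute the three diagonal entries. Expanding $v^{\triangle,I}_{ij}=\tilde v^{\triangle,I}_{ij}+\tilde v^{\triangle,II}_{ij}$ and $v^{\triangle,II}_{ij}=\tilde v^{\triangle,I}_{ij}-\tilde v^{\triangle,II}_{ij}$ and using the biorthogonality of $\tilde v^{\triangle,I/II}$ against $B,C$ from \cref{Dual:TrigCor}, everything reduces to the two scalar integrals $\langle\tilde v^{\triangle,I}_{ij},B_{ij}\rangle$ and $\langle\tilde v^{\triangle,II}_{ij},C_{ij}\rangle$ already evaluated in the proof of the preceding lemma via \eqref{Orthogonality}. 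These are exactly the diagonal entries of the matrix $D$ of \cref{modBi}; since $\alpha_1,\alpha_2$ are their reciprocals and $A=\begin{bmatrix}1&1\\1&-1\end{bmatrix}$ satisfies $A^2=2I$, the identity $ADB^\top=I$ holds for the coefficient matrix $B$ implicit in the definition of $b^{\triangle,I/II}$, and \cref{modBi} gives $\langle v^{\triangle,I}_{ij},b^{\triangle,I}_{ij}\rangle=\langle v^{\triangle,II}_{ij},b^{\triangle,II}_{ij}\rangle=1$. For the type-$III$ entry, $v^{\triangle,III}_{1j}=\Grad(f_1)\,\phat^{3}_j(y)$ is orthogonal to every $C_{kl}$ and to every type-$I/II$ dual, so only $\alpha_3\langle v^{\triangle,III}_{1j},B_{1l}\rangle$ survives; evaluating this single tensor-product integral with \eqref{Orthogonality} on each variable fixes the normalising constant $\alpha_3$ and makes the entry equal to $1$. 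Combining the three cases gives $G_{ij,kl}=\delta_{ik}\delta_{jl}\delta_{T_1,T_2}$.

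I expect no genuine obstacle here; the work is purely bookkeeping, and the delicate point is consistency rather than difficulty. Concretely, the main thing to get right is to carry the Duffy--Jacobian powers $\left(\frac{1-y}{2}\right)^{\,\cdot}$ and the conversion $\phat^{\alpha}_n\leftrightarrow P^{(\alpha-1,1)}_{n-1}$ of \eqref{dual:IntLeg} through all three function types so that the weights in \eqref{Orthogonality} match with parameters $(\alpha,\beta)\in\{(0,0),(1,1),(2i-1,1),(2i,0)\}$; an off-by-one in any exponent would change a normalisation constant. The type-$III$ case is the only one needing separate treatment, because $\Grad(f_1)$ has the special form in \eqref{eq:DualGrad} and the index $i=1$ falls outside the range $i\geq 2$ used for types $I$ and $II$.
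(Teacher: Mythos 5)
Your proposal is correct and follows essentially the same route as the paper: the paper disposes of this corollary in one line (``since all basis functions are properly scaled, the corollary follows directly''), with all the actual work done in the preceding lemma via \cref{Dual:TrigCor}, \cref{modBi} with $A=\begin{bmatrix}1&1\\1&-1\end{bmatrix}$, the two diagonal integrals $\langle\tilde v^{\triangle,I}_{ij},B_{ij}\rangle$ and $\langle\tilde v^{\triangle,II}_{ij},C_{ij}\rangle$, and the separate type-$III$ computations --- exactly the steps you reconstruct.
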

\subsection{Tetrahedral case}
Our ansatz is the same as in the triangular case.
Firstly, we split the basis functions in a simpler basis, then find the orthogonal basis to this simpler basis, and lastly build the right dual basis by linear combination.
Recall the basis of $H(\Curl)$ on the tetrahedral $\blacktriangle$ is given by
\begin{equation}\label{Dual:TetBasis}
    \begin{aligned}
    v_{ijk}^I &= \Grad(f_i g_{ij} h_{ijk}) = \Grad (f_i) g_{ij} h_{ijk} + f_i \Grad(g_{ij}) h_{ijk} + f_i g_{ij} \Grad(h_{ijk}),\\
    v_{ijk}^{II} &= \Grad (f_i) g_{ij} h_{ijk} - f_i \Grad(g_{ij}) h_{ijk} + f_i g_{ij} \Grad(h_{ijk}),\\
    v_{ijk}^{III} &= \Grad (f_i) g_{ij} h_{ijk} + f_i \Grad(g_{ij}) h_{ijk} - f_i g_{ij} \Grad(h_{ijk}),\\
    v_{1jk}^{IV} &= v^{\ned_0}_{[1,2]} g_{ij} h_{ijk},
    \end{aligned}
\end{equation}
for $2 \leq i, 1\leq j,k,$ and $i+j+k\leq p,$ where
   $ f_i(x,y,z) = \lhat_i\left(\frac{4x}{1-2y-z}\right) \left(\frac{1-2y-z}{4}\right)^i$,
    $g_{ij}(y,z) = \phat_j^{2i}\left(\frac{2y}{1-z}\right) \left(\frac{1-z}{2}\right)^j,$ and
    $h_{ijk} (z)= \phat_k^{2i+2j}(z).$
Here $v_{[1,2]}^{\ned_0}$ is the lowest order N\'ed\'elec function of first kind, based on the edge from vertex $1$ to $2$. With the substitutions $\eta = \frac{4x}{1-2y-z}$ and $\chi = \frac{2y}{1-z},$ the gradients of the auxiliary functions are
\begin{equation*}
    \begin{aligned}
    \Grad f_i &= \begin{pmatrix} L_{i-1}(\eta) \\ \frac12 L_{i-2}(\eta)\\ \frac14 L_{i-2}(\eta) \end{pmatrix}\left(\frac{1-\chi}{2}\right)^{i-1}\left(\frac{1-z}{2}\right)^{i-1},\\
    \Grad g_{ij} &= \begin{pmatrix}0\\ P_{j-1}^{(2i,0)} (\chi)\\ \frac{\chi}{2} P_{j-1}^{(2i,0)}(\chi) - \frac{j}{2} \phat_{j}^{2i}(\chi) \end{pmatrix} \left(\frac{1-z}{2}\right)^{j-1} \text{ and }\\
    \Grad h_{ijk} &= \begin{pmatrix} 0\\0 \\ P^{(2i+2j,0)}_{k-1}(z)\end{pmatrix},
    \end{aligned}
\end{equation*}
with the simplifications as in \cite{beuchler,beuchlerpillwein}.
We now derive three biorthogonal vectors with respect to $(\Grad f_i)\,g_{ij}\,h_{ijk}, f_i\,(\Grad g_{ij})\,h_{ijk}$ and $f_i\,g_{ij}\,(\Grad h_{ijk}).$\\
Similar to the triangular case, we have the following conditions on the dual functions $B_{ijk},C_{ijk}$ and $D_{ijk}:$
\begin{myprob}
Find $B_{lmn},C_{lmn}, D_{lmn}$ such that
\begin{align*}
      f_i\,(\Grad g_{ij})\,h_{ijk} \perp B_{lmn} \perp f_i\,g_{ij}\,(\Grad h_{ijk}) \text{ and } \langle B_{lmn},(\Grad f_i)\,g_{ij}\,h_{ijk} \rangle = r_{ijklmn} \delta_{il} \delta_{jm} \delta_{kn},\\
      (\Grad f_i)\,g_{ij}\,h_{ijk} \perp C_{lmn} \perp f_i\,g_{ij}\,(\Grad h_{ijk}) \text{ and } \langle C_{lmn}, f_i\,(\Grad g_{ij})\,h_{ijk} \rangle = s_{ijklmn} \delta_{il} \delta_{jm} \delta_{kn},\\
      (\Grad f_i)\,g_{ij}\,h_{ijk} \perp D_{lmn} \perp f_i\,(\Grad g_{ij})\,h_{ijk} \text{ and } \langle D_{lmn},f_i\,g_{ij}\,(\Grad h_{ijk}) \rangle = t_{ijklmn} \delta_{il} \delta_{jm} \delta_{kn}.
\end{align*}
\end{myprob}

Since the basis vectors build a lower triangular system, we know that we need to find an upper triangular system. Thus,
\begin{equation*}
\begin{aligned}
    B_{lmn} = \begin{pmatrix} b_{lmn}(\eta,\chi,z) \\ 0 \\ 0 \end{pmatrix},\quad
    C_{lmn} = \begin{pmatrix} c^{(1)}_{lmn}(\eta,\chi,z) \\ c^{(2)}_{lmn}(\eta,\chi,z) \\ 0 \end{pmatrix},\quad\textrm{and}\quad
    D_{lmn} = \begin{pmatrix} d^{(1)}_{lmn}(\eta,\chi,z) \\ d^{(2)}_{lmn}(\eta,\chi,z) \\ d^{(3)}_{lmn}(\eta,\chi,z) \end{pmatrix}.
\end{aligned}
\end{equation*}
The construction of $B_{lmn}$ is trivial, and the construction of $C_{lmn}$ follows from the $2D$ case. Thus,
\begin{equation*}
\begin{aligned}
    B_{lmn} &= \begin{pmatrix} 1 \\ 0 \\ 0 \end{pmatrix} L_{l-1}(\eta) \left(\frac{1-\chi}{2}\right)^{l-1} P_{m-1}^{(2l-1,1)}(\chi) \left(\frac{1-z}{2}\right)^{l+m-2}P_{n-1}^{(2l+2m-1,1)}(z) \text{ and }\\
    C_{lmn} &= \begin{pmatrix} (2+l-1)P^{(1,1)}_{l-1}(\eta) \\ -2(2+l-2)P^{(1,1)}_{l-2}(\eta) \\ 0 \end{pmatrix} \left(\frac{1-\chi}{2}\right)^{l-1}P_{m-1}^{(2l,0)}(\chi) \left(\frac{1-z}{2}\right)^{l+m-2} P_{n-1}^{(2l+2m-1,1)}(z),
\end{aligned}
\end{equation*}
where the exponents of $\left(\frac{1-\chi}{2}\right)$ and $\left(\frac{1-z}{2}\right)$ are determined with respect to the functional determinant of the Duffy trick, i.e. $\left(\frac{1-\chi}{2}\right)\left(\frac{1-z}{2}\right)^2.$\\
To derive $D_{lmn},$ we go step by step. It follows immediately that 
\begin{equation}d^{(3)}_{lmn} = P_{l-2}^{(1,1)}(\eta) \left(\frac{1-\chi}{2}\right)^{l-2} P_{m-1}^{(2l-1,1)}(\chi) \left(\frac{1-z}{2}\right)^{l+m-2} P_{n-1}^{(2l+2m,0)}(z),\end{equation}
due to $\langle D_{lmn},f_i\,g_{ij}\,(\Grad h_{ijk}) \rangle = s_{ijklmn} \delta_{il} \delta_{jm} \delta_{kn}.$
Next we derive $d^{(2)}_{lmn}$ by demanding 
\begin{equation}\label{eq:DualCondI}
    0 = \langle f_{i}(\Grad g_{ij}) h_{ijk}, D_{lmn} \rangle.
\end{equation}
An rather obvious choice for $d^{(2)}_{lmn}$ is $d^{(2)}_{lmn} = P^{(1,1)}_{l-2} (\eta) \, \tilde{d}^{(2)}(\chi) \left(\frac{1-z}{2}\right)^{l+m-2} P_{n-1}^{(2l+2m,0)}(z).$
This choice reduces the condition \eqref{eq:DualCondI} to
\begin{equation}\begin{aligned}\label{eq:IntJacElem}
    0 =& \int_{-1}^{1} \left(\frac{1-\chi}{2}\right)^{i+1} P_{j-1}^{(2i,0)}(\chi) \,\tilde{d}^{(2)}(\chi)\\ &+ \left(\frac{1-\chi}{2}\right)^{2i-1}\left(\frac{\chi}{2} P_{j-1}^{(2i,0)}(\chi) -\frac{j}{2} \phat_j^{2i}(\chi)\right)P_{m-1}^{(2i-1,1)}(\chi) \,\mathrm{d}\chi,
\end{aligned}\end{equation}
since the condition \eqref{eq:DualCondI} is trivial for $i \neq l$ and $k\neq n$ with this choice of $d^{(2)}_{ijk}.$ \\
On the right part of the integral, the combination $\phat^{2i}_j(\chi)P_{m-1}^{(2i-1,1)}(\chi)$ already fulfils the orthogonality relation. On the other hand, for the product of the two different Jacobi polynomials, namely $P^{(2i,0)}_{j-1}(\chi)$ and $P_{m-1}^{(2i-1,1)}(\chi),$ we can't apply standard orthogonality results. We eliminate this mixed part by linear combination. Therefore, we choose 
\[\tilde{d}^{(2)} = -\frac{\chi}{2}\left(\frac{1-\chi}{2}\right)^{l-2} P_{m-1}^{(2l-1,1)}(\chi) + \hat{d}^{(2)},\]
such that the mixed products cancel each other out. Those linear combinations result in the further reduced condition
\begin{equation}\label{eq:DualCondII}
        0 = \int_{-1}^{1} \left(\frac{1-\chi}{2}\right)^{i+1} P_{j-1}^{(2i,0)}(\chi) \,\hat{d}^{(2)}(\chi) - \frac{j}{2} \left(\frac{1-\chi}{2}\right)^{2i-1}\phat_j^{2i}(\chi)P_{m-1}^{(2i-1,1)}(\chi) \,\mathrm{d}\chi.
\end{equation}
The last part of the integral in \eqref{eq:DualCondII} only appears if $m=j.$ We can achieve the same for the first part of the integral, if we choose $\hat{d}^{(2)} = c \left(\frac{1-\chi}{2}\right)^{l-1}P_{m-1}^{(2l,0)}(\chi)$.
Since both instances in \eqref{eq:DualCondII} are integrals over Jacobi polynomials with matching indices, order and weights, we can determine the constant $c$ directly. It holds that
\begin{align*}
    \int_{-1}^{1} \left(\frac{1-\chi}{2}\right)^{2i} \left(P_{j-1}^{(2i,0)}(\chi)\right)^2 \,\mathrm{d}\chi &= \frac{1}{2j+2i-1}\\
    \int_{-1}^{1} \left(\frac{1-\chi}{2}\right)^{2i-1}\left(\frac{1+\chi}{2}\right) \left(P_{j-1}^{(2i-1,1)}(\chi)\right)^2 \,\mathrm{d}\chi &= \frac{j}{(2j+2i-1)(2i+j-1)}. 
\end{align*}
Collecting everything 
\begin{equation*}
\begin{aligned}
    d^{(2)}_{lmn} = P^{(1,1)}_{l-2}(\eta) &\left(\frac{1-\chi}{2}\right)^{l-2} Q_{m,2}(\chi)
     \left(\frac{1-z}{2}\right)^{l+m-2}P_{n-1}^{(2l+2m,0)}(z)
\end{aligned}
\end{equation*}
with the polynomial
\begin{equation}
    \label{def:Qm2}
    Q_{m,2}(\chi)=-\frac{\chi}{2}P_{m-1}^{(2l-1,1)}(\chi) + \frac{m}{2l+m-1} \frac{1-\chi}{2} P_{m-1}^{(2l,0)}(\chi)
\end{equation}
of degree $m$.
Now we need to determine $d^{(1)}_{lmn},$ by 
    $0 = \langle (\Grad f_i) g_{ij} h_{ijk}, D_{lmn} \rangle$.
Inserting $d^{(2)}_{lmn}$ and $d^{(3)}_{lmn}$ yields the following condition, after some simplification
\begin{align*}
    0 =& \int_{(-1,1)^3}L_{i-1}(\eta)\left(\frac{1-\chi}{2}\right)^{i-1} \phat_{j}^{2i}(\chi) \left(\frac{1-z}{2}\right)^{i+j+1} \phat^{2i+2j}_k(z) d^{(1)}_{lmn}(\eta,\chi,z) \,\mathrm{d}\eta \,\mathrm{d}\chi \,\mathrm{d}z\\ 
    &+ \int_{(-1,1)^3} L_{i-2}(\eta) P_{l-2}^{(1,1)}(\eta) \left(\frac{1-\chi}{2}\right)^{i+l-1} \phat^{2i}_{j}(\chi) \left[P_{m-1}^{(2l-1,1)}(\chi) + \frac{m}{2l+m-1}P_{m-1}^{(2l,0)}(\chi)\right]\\&\left(\frac{1-z}{2}\right)^{i+j+l+m-1} \phat^{2i+2j}_k(z) P^{(2l+2m-1,1)}_{n-1}(z) \, \mathrm{d}z \, \mathrm{d} \chi \, \mathrm{d}\eta.
\end{align*}
If we choose $d^{(1)}_{lmn}(\eta,\chi,z) = \frac{2+l-1}{2(2+l-2)} P_{l-1}^{(1,1)}(\eta) \hat{d}^{(1)}(\chi) \left(\frac{1-z}{2}\right)^{l+m-2} P_{n-1}^{(2l+2m,0)}(z),$ we can factor all terms depending on $\eta$ and $z$ out. Thus, we only need to determine $\hat{d}^{(1)}(\chi),$ by the condition
\begin{align*}
    0 =& \int_{-1}^{1} \left(\frac{1-\chi}{2}\right)^{i-1} \phat_{j}^{2i}(\chi)\,\hat{d}^{(1)}(\chi)\\ &+ \left(\frac{1-\chi}{2}\right)^{i+l-1} \phat^{2i}_{j}(\chi) \left[P_{m-1}^{(2l-1,1)}(\chi) + \frac{m}{2l+m-1}P_{m-1}^{(2l,0)}(\chi)\right]\, \mathrm{d} \chi.
\end{align*} 
It is obvious, that we can choose 
\begin{equation*}
\begin{aligned}
d^{(1)}_{lmn} =& \frac{-(2+l-1)}{2(2+l-2)} P_{l-1}^{(1,1)}(\eta) \left(\frac{1-\chi}{2}\right)^{l-1}
Q_{m-1,1}(\chi) \left(\frac{1-z}{2}\right)^{l+m-2} P_{n-1}^{(2l+2m,0)}(z)
\end{aligned}
\end{equation*}
with the $m-1$ degree polynomial
\begin{equation}\label{def:Qm1}
Q_{m-1,1}(\chi)=P_{m-1}^{(2l-1,1)}(\chi) + \frac{m}{2l+m-1}P_{m-1}^{(2l,0)}(\chi).
\end{equation}
We summarize in the following definition.
\begin{definition}[Dual basis on the tetrahedron]
Let $Q_{m-1,1}$ and $Q_{m,2}$ be defined by \eqref{def:Qm1} and \eqref{def:Qm2}, respectively.
Let $l\geq2, m,n \geq 1$ then define
\begin{align}\label{Tab:DualTet}
    \tilde{b}^{\blacktriangle,I}_{lmn} \coloneqq& \begin{pmatrix} 1 \\ 0 \\ 0 \end{pmatrix} L_{l-1}(\eta) \left(\frac{1-\chi}{2}\right)^{l-1} P_{m-1}^{(2l-1,1)}(\chi) \left(\frac{1-z}{2}\right)^{l+m-2}P_{n-1}^{(2l+2m-1,1)}(z)\\
     \tilde{b}^{\blacktriangle,II}_{lmn} \coloneqq& \begin{pmatrix} (l+1)P^{(1,1)}_{l-1}(\eta) \\ -2l P^{(1,1)}_{l-2}(\eta) \\ 0 \end{pmatrix} \left(\frac{1-\chi}{2}\right)^{l-1}P_{m-1}^{(2l,0)}(\chi) \left(\frac{1-z}{2}\right)^{l+m-2} P_{n-1}^{(2l+2m-1,1)}(z), \nonumber\\
    \tilde{b}^{\blacktriangle,III}_{lmn} \coloneqq& \begin{pmatrix} 
    \frac{-(l+1)}{2\,l} P_{l-1}^{(1,1)}(\eta) \left(\frac{1-\chi}{2}\right)^{l-1} Q_{m-1,1}(\chi) \\ P_{l-2}^{(1,1)}(\eta)\left(\frac{1-\chi}{2}\right)^{l-2} Q_{m,2}(\chi) \\ 
    P_{l-2}^{(1,1)}(\eta) \left(\frac{1-\chi}{2}\right)^{l-2} P_{m-1}^{(2l-1,1)}(\chi) 
    \end{pmatrix} \left(\frac{1-z}{2}\right)^{l+m-2} P_{n-1}^{(2l+2m,0)}(z) 
    \nonumber,
\end{align}
where $\eta = \frac{4x}{1-2y-z}$, $\chi = \frac{1-2y-z}{4}$ .
\end{definition}
Thus, the following lemma has been shown.
\begin{lemma}\label{Dual:tetcor}
Let $f_i, g_{ij}$ and $h_{ijk}$ be defined as in \eqref{Dual:TetBasis}. Then, the functions
\begin{align*}
    \tilde{v}^{\blacktriangle,I}_{ijk} = (\Grad f_i)g_{ij} h_{ijk},\quad
    \tilde{v}^{\blacktriangle,II}_{ijk}= f_i (\Grad g_{ij}) h_{ijk},\quad\textrm{and}\quad
    \tilde{v}^{\blacktriangle,III}_{ijk}= f_i g_{ij} (\Grad h_{ijk})
\end{align*}
 are biorthogonal to $\tilde{b}_{lmn}^{\blacktriangle,I},\tilde{b}_{lmn}^{\blacktriangle,II}$ and $\tilde{b}_{lmn}^{\blacktriangle,III}$ defined by \cref{Tab:DualTet}, i.e.
\[
\langle \tilde{v}^{\blacktriangle,\omega_1}_{ijk}, \tilde{b}^{\blacktriangle,\omega_2}_{lmn} \rangle = c^{\omega_1}_{ijk} \, \delta_{il} \delta_{jm} \delta_{kn} \delta_{\omega_1,\omega_2}, \quad \omega_1,\omega_2 \in \lbrace I, II, III \rbrace.
\]
\end{lemma}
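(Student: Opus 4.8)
The plan is to verify the $3\times 3$ array of scalar products $\langle \tilde{v}^{\blacktriangle,\omega_1}_{ijk},\tilde{b}^{\blacktriangle,\omega_2}_{lmn}\rangle$ by reducing every integral over $\blacktriangle$ to an integral over the cube $(-1,1)^3$ in the variables $\eta=\tfrac{4x}{1-2y-z}$, $\chi=\tfrac{2y}{1-z}$, $z$, with Duffy Jacobian $\left(\tfrac{1-\chi}{2}\right)\left(\tfrac{1-z}{2}\right)^2$, and then splitting into one-dimensional Jacobi-weighted integrals. First I would dispose of the entries with $\omega_1\neq\omega_2$. Three of them vanish purely by the support structure of the vectors: $\tilde{v}^{\blacktriangle,II}_{ijk}=f_i(\Grad g_{ij})h_{ijk}$ has vanishing first component and $\tilde{v}^{\blacktriangle,III}_{ijk}=f_ig_{ij}(\Grad h_{ijk})$ has only a nonzero third component, whereas $\tilde{b}^{\blacktriangle,I}_{lmn}$ is supported on the first component and $\tilde{b}^{\blacktriangle,II}_{lmn}$ on the first two, so $\langle\tilde{v}^{\blacktriangle,II},\tilde{b}^{\blacktriangle,I}\rangle$, $\langle\tilde{v}^{\blacktriangle,III},\tilde{b}^{\blacktriangle,I}\rangle$ and $\langle\tilde{v}^{\blacktriangle,III},\tilde{b}^{\blacktriangle,II}\rangle$ are zero by inspection.

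The two genuinely nontrivial vanishing identities, $\langle\tilde{v}^{\blacktriangle,I}_{ijk},\tilde{b}^{\blacktriangle,III}_{lmn}\rangle=0$ and $\langle\tilde{v}^{\blacktriangle,II}_{ijk},\tilde{b}^{\blacktriangle,III}_{lmn}\rangle=0$, were already the design equations for $d^{(2)}_{lmn}$ and $d^{(1)}_{lmn}$, so I would simply re-collect that argument. After the Duffy substitution the $\eta$- and $z$-dependent factors of $\tilde{b}^{\blacktriangle,III}_{lmn}$ were chosen precisely so that they factor out, and each condition collapses to the one-dimensional identities \eqref{eq:IntJacElem}--\eqref{eq:DualCondII} in $\chi$. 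There the cross term $P^{(2i,0)}_{j-1}(\chi)\,P^{(2i-1,1)}_{m-1}(\chi)$ --- which obeys no standard orthogonality relation --- is annihilated by the very construction of the polynomials $Q_{m,2}$ and $Q_{m-1,1}$ in \eqref{def:Qm2}--\eqref{def:Qm1}, while the two remaining summands are honest weighted $L^2$ products of Jacobi polynomials that cancel because of the coefficient $\tfrac{m}{2l+m-1}$ fixed by the two normalisation integrals displayed just above \eqref{def:Qm2}; terms of the form $\int L_{i-1}P^{(1,1)}_{l-1}$ arising along the way are handled by \cref{lemma:IntJacLeg}.

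For the three diagonal families $\langle\tilde{v}^{\blacktriangle,\omega}_{ijk},\tilde{b}^{\blacktriangle,\omega}_{lmn}\rangle$ I would use \eqref{dual:IntLeg} to rewrite the integrated Legendre/Jacobi factors as plain Jacobi polynomials, so that after Duffy the integral becomes a product of three one-dimensional Jacobi-weighted integrals. Applying \eqref{Orthogonality} successively --- with $(\alpha,\beta)\in\{(0,0),(1,1)\}$ in $\eta$, then $(\alpha,\beta)\in\{(2i-1,1),(2i,0)\}$ in $\chi$, then $(\alpha,\beta)\in\{(2i+2j-1,1),(2i+2j,0)\}$ in $z$ --- forces $i=l$ from the $\eta$-integral, then $j=m$ from the $\chi$-integral (the weights now matching because $i=l$), then $k=n$ from the $z$-integral, leaving a nonzero constant $c^{\omega}_{ijk}$. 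For $\omega=III$ one must once more expand $Q_{m,2}$ and $Q_{m-1,1}$ and check that the cross terms cancel in pairs as above. I expect the bookkeeping for $\omega=III$ to be the main obstacle: keeping straight which shifted Jacobi weight is attached to which power of $\left(\tfrac{1-\chi}{2}\right)$ and $\left(\tfrac{1-z}{2}\right)$, and making sure the Jacobian powers are absorbed so that every one-dimensional factor is a bona fide orthogonality integral. Everything else is a mechanical application of \eqref{Orthogonality}, \eqref{dual:IntLeg} and \cref{lemma:IntJacLeg}, from which the claimed $\delta_{il}\delta_{jm}\delta_{kn}\delta_{\omega_1,\omega_2}$ structure with an explicit constant follows.
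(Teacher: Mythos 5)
Your overall strategy is exactly the paper's: the paper gives no separate proof of this lemma (it states ``Thus, the following lemma has been shown''), because the statement is read off from the preceding construction of $\tilde{b}^{\blacktriangle,I}_{lmn},\tilde{b}^{\blacktriangle,II}_{lmn},\tilde{b}^{\blacktriangle,III}_{lmn}$ --- the triangular support structure, the Duffy factorisation into one-dimensional Jacobi integrals, and the cancellation of the mixed term $P^{(2i,0)}_{j-1}P^{(2i-1,1)}_{m-1}$ via $Q_{m,2}$ and $Q_{m-1,1}$. Your handling of the diagonal entries and of the two conditions involving $\tilde{b}^{\blacktriangle,III}_{lmn}$ reproduces that construction (modulo a harmless swap: $\langle\tilde{v}^{\blacktriangle,I},\tilde{b}^{\blacktriangle,III}\rangle=0$ is the design equation for $d^{(1)}_{lmn}$ and $\langle\tilde{v}^{\blacktriangle,II},\tilde{b}^{\blacktriangle,III}\rangle=0$ the one for $d^{(2)}_{lmn}$, not the other way round; also, the diagonal entry for $\omega=III$ needs no expansion of the $Q$'s, since only the third components meet there).

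However, your enumeration of the off-diagonal entries is incomplete: a $3\times3$ array has six, and you account for only five. The missing one is $\langle\tilde{v}^{\blacktriangle,I}_{ijk},\tilde{b}^{\blacktriangle,II}_{lmn}\rangle$, which does \emph{not} vanish by support ($\Grad f_i$ has all three components nonzero, $\tilde{b}^{\blacktriangle,II}_{lmn}$ the first two) and is precisely the condition that dictates the coefficients $(l+1)$ and $-2l$ in \eqref{Tab:DualTet}. Since both components of $\Grad f_i$ carry the same $\chi$- and $z$-dependence, and the vector part of $\tilde{b}^{\blacktriangle,II}_{lmn}$ is multiplied by a common scalar factor, the $\eta$-integral factors out and the condition reduces to
\begin{equation*}
(l+1)\int_{-1}^{1} L_{i-1}(\eta)\,P^{(1,1)}_{l-1}(\eta)\,\mathrm{d}\eta \;-\; l\int_{-1}^{1} L_{i-2}(\eta)\,P^{(1,1)}_{l-2}(\eta)\,\mathrm{d}\eta \;=\;0,
\end{equation*}
which holds by \cref{lemma:IntJacLeg}: under identical parity and index conditions both terms equal $4$, and otherwise both vanish. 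This is the same mechanism you already invoke, and the paper covers it with the remark that ``the construction of $C_{lmn}$ follows from the $2D$ case,'' so the gap is easily filled --- but as written your case analysis does not close.
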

As before, we transfer this to the original interior basis functions. 
\begin{lemma}
Let $\tilde{b}_{lmn}^{\blacktriangle,I},\tilde{b}_{lmn}^{\blacktriangle,I}$ and $\tilde{b}_{lmn}^{\blacktriangle,I}$ be defined by \cref{Tab:DualTet}.\\
For $i \geq 2, j \geq 1, k \geq 1$ the functions $v^{\blacktriangle,I}_{ijk},v^{\blacktriangle,II}_{ijk},v^{\blacktriangle,III}_{ijk}$ and $v^{\blacktriangle,IV}_{1jk}$ are biorthogonal to
\begin{align*}
    b^{\blacktriangle,I}_{lmn} &=  \frac12 \alpha^{(2)}_{lmn} \tilde{b}^{\blacktriangle,II}_{lmn} +\frac12 \alpha^{(3)}_{lmn} \tilde{b}^{\blacktriangle,III}_{lmn},\\
    b^{\blacktriangle,II}_{lmn} &= \frac12 \alpha^{(1)}_{lmn} \tilde{b}^{\blacktriangle,I}_{lmn} - \frac12 \alpha_{lmn}^{(2)} \tilde{b}^{\blacktriangle,II}_{lmn}, \\
    b^{\blacktriangle,III}_{lmn} &= \frac12 \alpha^{(1)}_{lmn} \tilde{b}^{\blacktriangle,I}_{lmn} - \frac12 \alpha_{lmn}^{(3)} \tilde{b}^{\blacktriangle,III}_{lmn},\\
    b^{\blacktriangle,IV}_{1mn} &= \alpha^{(4)}_{lmn} \tilde{b}^{\blacktriangle,I}_{1mn},
\end{align*}
where 
\begin{align*}
    \alpha^{(1)}_{lmn} &= \frac{1}{2^7} (2l-1)(2m+2l-1)(m+2l-1)(2n+2l+2m-1)(n+2l+2m-1)\\
    \alpha^{(2)}_{lmn} &= \frac{1}{2^6} (2l-1)(2m+2l-1)(2n+2l+2m-1)(n+2l+2m-1)\\
    \alpha^{(3)}_{lmn} &= \frac{-1}{2^5} l(2l-1)(2m+2l-1)(m+2l-1) (2n+2l+2m-1)\\
    \alpha^{(4)}_{lmn} &= \frac{1}{2^5} (2m+2)(m+2)(n+2m+2)(2n+2m+2) 
\end{align*}
\end{lemma}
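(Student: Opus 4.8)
The plan is to follow, essentially verbatim, the two–step scheme already used for the quadrilateral and the triangle. The first step exploits that the interior basis \eqref{Dual:TetBasis} is the image of the auxiliary system of \Cref{Dual:tetcor} under a fixed linear map: by the product rule, for matching indices $(i,j,k)=(l,m,n)$,
\begin{equation*}
\begin{pmatrix} v^{\blacktriangle,I}_{ijk}\\ v^{\blacktriangle,II}_{ijk}\\ v^{\blacktriangle,III}_{ijk}\end{pmatrix}
= A\begin{pmatrix} \tilde v^{\blacktriangle,I}_{ijk}\\ \tilde v^{\blacktriangle,II}_{ijk}\\ \tilde v^{\blacktriangle,III}_{ijk}\end{pmatrix},\qquad
A=\begin{bmatrix} 1&1&1\\ 1&-1&1\\ 1&1&-1\end{bmatrix},
\end{equation*}
while $v^{\blacktriangle,IV}_{1jk}$ is built from the lowest–order \nedelec\ field and is handled separately. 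By \Cref{Dual:tetcor} the auxiliary families are biorthogonal, $\langle \tilde v^{\blacktriangle,\omega_1}_{ijk},\tilde b^{\blacktriangle,\omega_2}_{lmn}\rangle=c^{\omega_1}_{ijk}\,\delta_{il}\delta_{jm}\delta_{kn}\delta_{\omega_1\omega_2}$, so \Cref{modBi} applies with $D=\diag(c^{I},c^{II},c^{III})$. Since $A$ is symmetric and
\[
A^{-1}=\tfrac12\begin{bmatrix} 0&1&1\\ 1&-1&0\\ 1&0&-1\end{bmatrix}
\]
is symmetric as well, the dual combination $B=A^{-1}D^{-1}$ has exactly the shape asserted: $b^{\blacktriangle,I}_{lmn}$ uses only $\tilde b^{\blacktriangle,II}_{lmn},\tilde b^{\blacktriangle,III}_{lmn}$, $b^{\blacktriangle,II}_{lmn}$ only $\tilde b^{\blacktriangle,I}_{lmn},\tilde b^{\blacktriangle,II}_{lmn}$, and $b^{\blacktriangle,III}_{lmn}$ only $\tilde b^{\blacktriangle,I}_{lmn},\tilde b^{\blacktriangle,III}_{lmn}$, each with coefficient $\tfrac12\alpha^{(r)}_{lmn}$ where $\alpha^{(r)}_{lmn}=(c^{r}_{lmn})^{-1}$; \Cref{modBi} then also delivers the cross–type relations $\langle v^{\blacktriangle,T_1}_{ijk},b^{\blacktriangle,T_2}_{lmn}\rangle=\delta_{il}\delta_{jm}\delta_{kn}\delta_{T_1T_2}$. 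Hence for types $I$–$III$ only the explicit values of $c^{I},c^{II},c^{III}$ remain to be computed.

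For that I would pull each $L^2(\blacktriangle)$ integral back to the cube $(-1,1)^3$ via the collapsed coordinates $\eta=\tfrac{4x}{1-2y-z}$, $\chi=\tfrac{2y}{1-z}$, whose functional determinant is $\tfrac{1-\chi}{2}\bigl(\tfrac{1-z}{2}\bigr)^{2}$. Because $\tilde v^{\blacktriangle,\omega}$ and $\tilde b^{\blacktriangle,\omega}$ were constructed to carry matching powers of $\tfrac{1-\chi}{2}$ and $\tfrac{1-z}{2}$, the integrand splits into a product of one integral in $\eta$, one in $\chi$ and one in $z$; each becomes a weighted $L^2$–norm of a single Jacobi polynomial once \eqref{dual:IntLeg} is used to rewrite $\phat^{2i}_j$ and $\lhat_i$ as Jacobi polynomials times $(1\pm\cdot)$–weights, and is then evaluated with \eqref{Orthogonality} and \eqref{Orthogonality2}. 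Collecting the three one–dimensional values gives closed forms for $c^{I},c^{II},c^{III}$, and reciprocating produces $\alpha^{(1)},\alpha^{(2)},\alpha^{(3)}$; in particular the $\eta$–integral entering $c^{III}$ carries the sign of $\int_{-1}^{1}(\eta^{2}-1)(P^{(1,1)}_{l-2}(\eta))^{2}\deta<0$, which accounts for the minus sign in $\alpha^{(3)}_{lmn}$.

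The type–$IV$ part is treated exactly as $v^{\triangle,III}_{1j}$ in the triangular case. Writing $b^{\blacktriangle,IV}_{1mn}=\alpha^{(4)}_{lmn}\tilde b^{\blacktriangle,I}_{1mn}$, one checks $\langle v^{\blacktriangle,IV}_{1jk},b^{\blacktriangle,I/II/III}_{lmn}\rangle=0$: after expanding in the $\tilde b$'s this follows from \Cref{Dual:tetcor} together with the vanishing $\eta$–integrals of a low–degree Legendre polynomial against higher–degree Jacobi polynomials, using \Cref{lemma:IntJacLeg} (the coefficients in $\tilde b^{\blacktriangle,III}$ were chosen precisely so that the resulting combination cancels). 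Conversely $\tilde b^{\blacktriangle,I}_{1mn}$ has a single nonzero component, $P^{(1,1)}_{m-1}(\chi)\bigl(\tfrac{1-z}{2}\bigr)^{m-1}P^{(2m+1,1)}_{n-1}(z)$, whose constant $\eta$–dependence is orthogonal to every $\tilde v^{\blacktriangle,\cdot}_{ijk}$ with $i\ge2$, so $b^{\blacktriangle,IV}_{1mn}$ is orthogonal to $v^{\blacktriangle,I/II/III}_{ijk}$ for $i\ge2$. Finally $\alpha^{(4)}_{lmn}$ is read off from $\langle v^{\blacktriangle,IV}_{1jk},\tilde b^{\blacktriangle,I}_{1jk}\rangle$ by inserting the explicit first component of $v^{\ned_0}_{[1,2]}$ on the $x$–edge and applying \eqref{Orthogonality} and \eqref{Orthogonality2} once more.

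The main obstacle is the bookkeeping in the middle step: one must track six indices and arrange that, after the Duffy pull–back, each one–dimensional factor is an orthogonality integral for Jacobi polynomials with exactly matching parameters and weights --- the powers of $\tfrac{1-\chi}{2}$ and $\tfrac{1-z}{2}$ are precisely those absorbed by the Jacobian, which is what makes the computation close. The type–$IV$ evaluation is a secondary obstacle, only because it forces one to work with the explicit Whitney edge field rather than with a gradient.
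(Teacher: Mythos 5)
Your proposal is correct and follows essentially the same route as the paper's proof: the same application of \cref{modBi} with $A=\bigl[\begin{smallmatrix}1&1&1\\1&-1&1\\1&1&-1\end{smallmatrix}\bigr]$ and its inverse to fix the structure of the $b^{\blacktriangle,\cdot}_{lmn}$, the same Duffy-type reduction to one-dimensional Jacobi orthogonality integrals for the diagonal constants, and the same treatment of the type-$IV$ functions via the cancellation of the two $\eta$-integrals and the $\eta$-independence of $\tilde{b}^{\blacktriangle,I}_{1mn}$. The only cosmetic difference is that you invoke \cref{lemma:IntJacLeg} for the type-$IV$ cancellation where the paper evaluates the two integrals directly through derivatives of Legendre polynomials; both yield the same cancelling values.
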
\begin{proof}
Again, we apply Lemma \ref{modBi}. Now, 
$A=\begin{bmatrix} 1 & 1 &1 \\ 1 & -1 & 1 \\ 1 & 1 &-1   \end{bmatrix}$.
from which its inverse is easily be computed as
$A^{-1}=\frac12 \begin{bmatrix} 0 & 1 &1 \\ 1 & -1 & 0 \\ 1 & 0 &-1   \end{bmatrix}$.
It remains to compute the diagonal entries.

The coefficients $c^{\omega}_{ijk}$ can be computed analogously to the triangular case be using the exact values of the integrals over Jacobi polynomials.
 Finally, one obtains
\begin{equation*}\begin{aligned}
    \langle \tilde{v}_{ijk}^I, \tilde{b}_{ijk}^I \rangle
    &= \frac{2^7}{(2i-1)} \frac{1}{(2j+2i-1)(j+2i-1)(2k+2i+2j-1)(k+2i+2j-1)},\\
    \langle \tilde{v}_{ijk}^{II}, \tilde{b}_{ijk}^{II} \rangle  &= \frac{2^6}{(2i-1)} \frac{1}{(2j+2i-1)(2k+2i+2j-1)(k+2i+2j-1)},\\
    \langle \tilde{v}_{ijk}^{III}, \tilde{b}_{ijk}^{III} \rangle
    & = \frac{2^6}{(2i-1)} \frac{1}{i \, (2i+2j-1)(j+2i-1)(2k+2i+2j-1) },
    \end{aligned}
\end{equation*}
by using the orthogonality relations of the Jacobi polynomials \eqref{Orthogonality}.
It remains to show, that to
\begin{equation*}
    v^{IV}_{1jk} = \begin{pmatrix} 1\\ \frac{\eta}{2} \\ \frac{\eta}{4} \end{pmatrix} \left(\frac{1-\chi}{2}\right) \phat^{1}_j(\chi) \left(\frac{1-z}{2}\right)^{j+1} \phat^{2j+3}_k(z)
\end{equation*}
the dual shape functions are naturally orthogonal. For $B_{ijk}$ orthogonality follows since the first component of $v^{IV}_{1jk}$ is independent of $\eta = \frac{4x}{1-2y-z}.$\\
For $C_{ijk}$ we apply the relations
\begin{align*}
    \int_{-1}^{1} (i+1) P^{(1,1)}_{i-1}(x)\dx& = 2 \int_{-1}^{1} \frac{\mathrm{d}}{\mathrm{d} x} L_{i}(x) \dx = 2\left[L_{i}(x)\right]_{-1}^{1} = 2(1 - (-1)^i) \quad\textrm{and}\\
    \int_{-1}^{1} \frac{x}{2} (2i) P^{(1,1)}_{i-2}(x)\dx
    &= 2\int_{-1}^{1} x \frac{\mathrm{d}}{\dx} L_{i-1}(x) \dx \\
    &= 2\left[x L_{i}(x)\right]_{-1}^{1} - \underbrace{\int_{-1}^{1} L_{-1}(x)}_{= 0} \dx 
    = 2(1 - (-1)^{i})
\end{align*}
to see that the scalar product $\langle v^{IV}_{1jk},C_{lmn}\rangle = 0,$ for all  $i,j,k,l,m,n.$ For $D_{ijk}$ the same relation is applied, thus $\langle v^{IV}_{1jk}, D_{lmn} \rangle = 0,$ for all $ i,j,k,l,m,n.$
On the other hand the dual function to $v^{IV}_{1jk}$ is easily found to be 
    $B_{1jk} = \begin{pmatrix}1 \\ 0 \\ 0 \end{pmatrix} P_{j-1}^{(2,1)}(y) \left(\frac{1-z}{2} \right)^{j-1} P_{k-1}^{(2j+2,1)}(z)$.
It is obviously orthogonal to $\Grad g_{ij}$ and $\Grad h_{ijk},$ furthermore it is orthogonal to $\Grad f_i$ since it is independent of $\eta.$
\end{proof}
We conclude with some remarks. 
\begin{remark}
It is usually possible to modify the index of the integrated Jacobi polynomials to modify the sparsity pattern and condition number of the element matrices. But in the context of polynomial dual functions the index $(2i)$ and $(2i+2j)$ are minimal, otherwise the dual functions will become rational with singularities for low polynomial degrees for the interior $H(\Curl)$ shape functions.
\end{remark}

\begin{remark}
The coefficients $\alpha_{lmn}^{(1)},\alpha_{lmn}^{(2)}$ and $\alpha_{lmn}^{(3)}$ can be significantly reduced by dividing each with \[(2l\spl2m\sm1)(2l\sm1)(2n\sm2l\sm2m\sm1).\] In this case one needs to compute the element matrix corresponding to biorthogonal system by numerical quadrature or similar methods. 
\end{remark}
\section{\label{sec:Con}Conclusion}
We summarize the main contribution of this paper in a more abstract notation.
Let $\V_0$ be the space $H^1_0$ or $H_0(\Curl)$ on an element $\Omega$ and
$\V_{hp,0}=\V_{hp} \cap \V_0$.
For given element based families of $hp$-FEM basis functions $\phi_i\in \V_{hp,0},$  
the authors developed biorthogonal test functions $\psi_j\in W_{hp}$. This allows us to represent the $L^2$-like projection based interpolation operator
$\mathcal{P}: \V_0(\Omega) \mapsto \V_{hp,0}$ given by
\[
\int_{\Omega}  (\mathcal{P}u)(\vec{x}) \; v_{hp}(\vec{x}) (\vec{x}) \;\mathrm{d}\vec{x} =
\int_{\Omega} u(\vec{x}) v_{hp}(\vec{x})\;\mathrm{d}\vec{x} \quad \forall v_{hp}\in W_{hp}.
\]
by 
\[
(Pu)(\vec{x}) =\sum_{i} \phi_i(\vec{x})  g_i \quad \textrm{with}\quad g_i= 
\int_{\Omega} u(\vec{x}) \psi_i(\vec{x}) \;\mathrm{d}\vec{x}.
\]
The primal functions $\phi_i$ are basis functions which are optimal to sparsity and condition number
on the reference element. The dual functions are expressed in closed expressions in terms
of Jacobi polynomials. This result can be used for further results in error estimation and
preconditioning where the above-mentioned operator $\mathcal{P}$ is used.\\
An extension to other weights or indices of Jacobi polynomials is limited. Since we include the weights of Jacobi polynomials in the basis functions, a decrease in the related indices results in the loss of most orthogonality relations for the low order cases.
On the other hand, an increase of the indices results in a modification of the dual functions and in a higher polynomial test space in \eqref{proInt}. From the point of view of the authors, closed formulas for dual functions of the basis functions of Fuentes et al. \cite{Fuentes} are very ambitious to develop.

\section{Acknowledgment}

\bibliographystyle{siamplain}
\bibliography{references}

\end{document}